\documentclass[11pt,oneside,reqno]{amsart}

\usepackage{graphicx}
\usepackage{tikz-cd}
\usepackage{mathtools}
\usepackage{amsfonts}
\usepackage{amscd,amsmath}
\usepackage{amssymb}
\usepackage{setspace}
\usepackage{blindtext}
\usepackage{graphicx}
\usepackage{xfrac}
\usepackage{amsmath}
\usepackage[normalem]{ulem}

\usepackage{gastex}
\usepackage{longtable}
\usepackage{lscape}
\usepackage{tabularx}
\usepackage{multicol}
\usepackage{verbatim}
\usepackage{multirow}
\usepackage{epsfig, graphics,graphicx}
\usepackage{geometry}
\usepackage{amscd}
\usepackage{xcolor}
\usepackage{hyperref}
\usepackage[utf8]{inputenc}

\usepackage{amssymb,amsmath,amsthm, verbatim, calc}
\numberwithin{equation}{section}
\usepackage[english]{babel}

\definecolor{armygreen}{rgb}{0.29, 0.33, 0.13}
\definecolor{darkgreen}{rgb}{0.0, 0.2, 0.13}

\newtheorem{thm}{Theorem}[section]
\newtheorem{lem}[thm]{Lemma}

\newtheorem{Def}[thm]{Definition}
\newtheorem{prop}[thm]{Proposition}
\newtheorem{rem}[thm]{Remark}

\newcommand{\Rem}{\begin{rem} \rm}
\newcommand{\bdfn}{\begin{Def} \rm}
\newcommand{\edfn}{\end{Def}}

\newcommand{\ba}{\begin{array}}
\newcommand{\ea}{\end{array}}

\begin{document}
\title[Structure of Generalized bi-circular idempotents and isometric reflections]{Structure of Generalized bi-circular idempotents and isometric reflections on $C^1[0,1]$}
		
\author{Himanshu Kumar}
\address{Department of Applied Sciences, Indian Institute of Information Technology Allahabad, Prayagraj-211015, U.P., India.}
\email{himanshumath1507@gmail.com}

\author{Hemant Kumar}
\address{Department of Applied Sciences, Indian Institute of Information Technology Allahabad, Prayagraj-211015, U.P., India.}
\email{hemantkr.math@gmail.com} 

\author{Abdullah Bin Abu Baker}
\address{Department of Applied Sciences, Indian Institute of Information Technology Allahabad, Prayagraj-211015, U.P., India.}
\email{abdullahmath@gmail.com}

\subjclass[2020]{46B04, 46T20, 47J05}
		
\keywords{Generalized bi-circular idempotents, Isometric reflection, Isometries on the space of continuously differentiable functions}

\thanks{The first-named and second-named authors gratefully acknowledge the Ministry of Education, New Delhi (India), for financial support, and IIIT Allahabad, India, for providing the resources and infrastructure to carry out this research. The third-named author is partially supported by Anusandhan National Research Foundation (MATRICS) grant No. MTR/2022/000710.}
		
\date{\today}
		
\begin{abstract} 
Let $C^1[0,1]$ be the complex linear space of all continuously differentiable complex-valued functions $f$ on the unit interval $[0,1]$ with respect to the norm $ \|f\|_{\sigma} = |f(0)| +\|f'\|_\infty$. Let $P_1, P_2: C^1[0,1] \rightarrow C^1[0,1]$ be distinct, nonzero idempotent maps, which are not necessarily linear, such that $P_1P_2 = P_2P_1 = 0$ and $P_1+P_2 = I$, where $I$ denotes the identity operator. It is proved that $\lambda_1P_1 + \lambda_2P_2$ is an isometry on $C^1[0,1]$, for some distinct unit modulus complex numbers $\lambda_1, \lambda_2$, if and only if either $\lambda_1 + \lambda_2 = 0$, or $\lambda_1P_1 + \lambda_2P_2$ is an isometry for all such complex numbers $\lambda_1, \lambda_2$. In the former case, the collection $\{P_1, P_2\}$ is called a family of generalized bi-circular idempotents; in the latter case, it is called a family of bi-circular idempotents. The structure of isometric reflection on $C^1[0,1]$, that is, an isometry $T$ such that $T^2 = I$, is characterized, and its relationship with the above class of idempotents maps is also discussed. 
\end{abstract}
		
\maketitle
		
\thispagestyle{empty}

\section{Introduction and Preliminary Results}		

The study of bi-circular idempotents and their generalizations started recently by Botelho, Miura, Abu Baker, and Kumar \cite{FT1, FT2, HAF1, HAF2}. A basic problem is to determine the structure of these mappings on a given complex normed linear space. It is often happens that such maps are related to isometric reflections on a given space. 

Let $X$ be a complex normed space. A map $P: X \rightarrow X$ is called idempotent if $P^2 = P$. If $P$ is linear and bounded, it is usually called a projection. A map $T: X \rightarrow X$ is called an isometry if $||Tx - Ty|| = ||x  -y||$ for all $x,y \in X$. It is called an isometric reflection if $T^2 = I$. Consider a collection $\mathcal{C} = \{P_1,P_2\}$ of distinct nonzero idempotent maps on $X$ such that $P_1P_2 = P_2P_1 = 0$ and $P_1+P_2 = I$. We say that $\mathcal{C}$ is a family of {\bf \em generalized bi-circular idempotents} (in short, $GBCI$) if $T = \lambda_1P_1 + \lambda_2P_2$ is an isometry on $X$, for some distinct unit modulus complex numbers $\lambda_1, \lambda_2$. It is called a family of {\bf \em bi-circular idempotents} (in short, $BCI$) if $\lambda_1P_1 + \lambda_2P_2$ is an isometry on $X$, for all unit modulus complex numbers $\lambda_1, \lambda_2$. In the former case, each of the maps $P_1,P_2$ is called generalized bi-circular idempotent; in the latter case, they are called bi-circular idempotents. We refer to $T$ as the isometry associated with the family $\mathcal{C}$. We emphasize here that in all maps under consideration, we don't assume any linearity. The linear version of the above problem is investigated extensively, and we refer interested readers to the papers \cite{FJ, FPD, MDC, DNN, JJ, LB1, LB2} and references therein. 

In \cite{FT1, FT2}, Botelho and Miura characterized $GBCI$ on $C^1[0,1]$ with the norm determined by a compact and connected subset $D$ of $[0,1]^2$ such that $\pi_1(D) \cup \pi_2(D) =[0,1]$, where $\pi_j$ is the projection map from $D$ to $j$-th coordinate of $[0,1]^2$. Later, Kumar, Abu Baker and Botelho \cite{HAF1} characterized $GBCI$ on the space $\mathcal{S}_A$ of analytic functions on the open unit disc $\mathbb{D}$ whose derivative can be extended to the closed unit disc $\overline{\mathbb{D}}$, and the space $\mathcal{S}^\infty$ of analytic functions on $\mathbb{D}$ with bounded derivatives. It was proved in \cite{HAF1} that the isometry $T$, associated with a family $\{P_1, P_2\}$ of $GBCI$, as well as $P_1$ and $P_2$ are real linear. Moreover, if $T =\lambda_1 P_1 + \lambda_2 P_2$, then $TP_1 = \lambda_1 P_1$  and $TP_2 = \lambda_2 P_2$. We record these two results for future reference. 

\begin{prop} \cite{HAF1} \label{T0=0}
Let $\mathcal{C} = \{P_1, P_2 \}$ be a family of generalized bi-circular idempotents corresponding to a surjective isometry $T$. Then $T(0) = 0$. Moreover, $T$ is a real linear.
\end{prop}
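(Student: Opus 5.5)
We need to show that $T(0)=0$ and that $T$ is real linear, where $T=\lambda_1P_1+\lambda_2P_2$ is a surjective isometry, $P_1,P_2$ are idempotents with $P_1P_2=P_2P_1=0$, $P_1+P_2=I$, and $\lambda_1\neq\lambda_2$ have unit modulus.

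\textbf{Step 1: idempotent identities.} First we extract the basic algebraic consequences. From $P_1+P_2=I$ we get, for every $x$,
\[
P_1(P_1x)+P_2(P_1x)=P_1x .
\]
Since $P_1^2=P_1$, this forces $P_2P_1x=0$, consistent with the hypothesis. Applying this at $x=0$ gives
\[
P_1(0)=P_1(P_1 0)=P_1(0),\qquad P_2(P_1(0))=0 .
\]
Set $a=P_1(0)$ and $b=P_2(0)$, so $a+b=0$. Then $P_1(a)=a$ (idempotence) and $P_2(a)=0$ (from $P_2P_1=0$). On the other hand $P_2(b)=b$ and $P_2(b)=P_2(P_2 0)=P_2(0)=b$. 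Also $P_1(b)=P_1(P_2(0))=0$.

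\textbf{Step 2: $T(0)=0$.} Next we compute $T$ at the relevant points:
\[
T(a)=\lambda_1 a,\qquad T(b)=\lambda_2 b=-\lambda_2 a,\qquad T(0)=\lambda_1 a+\lambda_2 b=(\lambda_1-\lambda_2)a .
\]
Since $T$ is an isometry,
\[
\|T(a)-T(0)\|=\|a\| \quad\Longrightarrow\quad \|\lambda_1a-(\lambda_1-\lambda_2)a\|=\|\lambda_2 a\|=\|a\| ,
\]
which is no information. So we compare the points $a$ and $b=-a$ instead:
\[
\|T(a)-T(b)\|=\|a-b\|=2\|a\| ,
\]
while $T(a)-T(b)=(\lambda_1+\lambda_2)a$. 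This gives $|\lambda_1+\lambda_2|\,\|a\|=2\|a\|$. Because $\lambda_1\neq\lambda_2$ are unimodular, $|\lambda_1+\lambda_2|<2$, and hence $a=0$. Therefore $b=0$ and $T(0)=0$.

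\textbf{Step 3: real linearity via Mazur--Ulam.} With $T(0)=0$ in hand, we invoke the Mazur--Ulam theorem. A surjective isometry between real normed spaces is affine, and $C^1[0,1]$ with $\|\cdot\|_\sigma$ is in particular a real normed space. So $T$ is affine, and since $T(0)=0$ it is real linear.

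\textbf{Main obstacle.} The delicate point is Step 2. One might hope $T(0)=0$ follows formally from the idempotent relations, but it does not; it genuinely needs the isometry property. The natural comparison of $T(a)$ with $T(0)$ is uninformative, and the key is to choose the pair of points $a,-a$, together with the strict inequality $|\lambda_1+\lambda_2|<2$, which holds precisely because $\lambda_1\neq\lambda_2$. Surjectivity is used only in the final appeal to Mazur--Ulam.
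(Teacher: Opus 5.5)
Your proof is correct. The paper gives no proof of this proposition; it is imported from earlier work of Kumar, Abu Baker and Botelho, so there is no internal argument to compare against. Invoking the Mazur--Ulam theorem once $T(0)=0$ is known is the standard way to get real linearity. Your Step 3 also works verbatim for any complex normed space regarded as a real one, not only $C^1[0,1]$, so it covers the proposition in the generality in which it is stated.

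One correction: your closing remark that $T(0)=0$ ``genuinely needs the isometry property'' is false. In Step 1 you already derived $P_2(a)=0$ for $a=P_1(0)$. Applying $P_1P_2=0$ at the point $a$ gives $a=P_1(0)=P_1(P_2(a))=0$, and then $P_2(0)=-a=0$. So $P_1(0)=P_2(0)=0$, and hence $T(0)=0$, follows from $P_1P_2=P_2P_1=0$ alone, without even using idempotency. Your Step 2 computation $\|T(a)-T(-a)\|=|\lambda_1+\lambda_2|\,\|a\|=2\|a\|$ with $|\lambda_1+\lambda_2|<2$ is valid but superfluous. The isometry and surjectivity hypotheses are needed only for the real linearity in Step 3.
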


\begin{lem} \cite{HAF1} \label{TP=lambdaP}
Let $\mathcal{C} = \{P_1, P_2\}$ be a family of generalized bi-circular idempotents such that $T =\lambda_1 P_1 + \lambda_2 P_2$. Then $TP_1 = \lambda_1 P_1$  and $TP_2 = \lambda_2 P_2$.
\end{lem}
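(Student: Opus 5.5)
From $P_1+P_2=I$ and $P_1P_2=P_2P_1=0$ we get a pointwise decomposition. For any $x\in X$, put $x_1=P_1x$ and $x_2=P_2x$, so that $x=x_1+x_2$. The plan is to compute $P_1x_1$ and $P_2x_1$, and then apply $T$ to $x_1$.

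First, using only the algebraic relations:
\begin{itemize}
\item $P_1x_1=P_1^2x=P_1x=x_1$, since $P_1$ is idempotent.
\item $P_2x_1=P_2P_1x=0$.
\end{itemize}
Hence
\[
T(x_1)=\lambda_1P_1x_1+\lambda_2P_2x_1=\lambda_1x_1+\lambda_2\cdot 0=\lambda_1x_1 .
\]
This says $TP_1x=\lambda_1P_1x$ for every $x$, i.e. $TP_1=\lambda_1P_1$.

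The symmetric argument gives the second identity. We have $P_2x_2=P_2^2x=x_2$ and $P_1x_2=P_1P_2x=0$. Therefore $T(x_2)=\lambda_2x_2$, that is, $TP_2=\lambda_2P_2$.

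No linearity of $P_1$ or $P_2$ is needed: $T$ is defined pointwise by $T(y)=\lambda_1P_1(y)+\lambda_2P_2(y)$, and we simply evaluate this at $y=P_jx$. The only subtle point is to keep the order of composition straight. We use $P_2P_1=0$ for the first identity and $P_1P_2=0$ for the second. Both are hypotheses of the definition of the family $\mathcal C$.

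There is no real obstacle here. In particular, the isometry property and Proposition \ref{T0=0} are not used.
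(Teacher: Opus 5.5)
Your argument is correct. Since $T$ is the pointwise combination $\lambda_1P_1+\lambda_2P_2$, precomposing with $P_1$ distributes even for nonlinear maps, and $P_1^2=P_1$, $P_2P_1=0$ then give $TP_1=\lambda_1P_1$; the case of $P_2$ is symmetric. The paper gives no proof of this lemma, quoting it from earlier work, and your direct verification is the natural one (the decomposition $x=x_1+x_2$, i.e.\ $P_1+P_2=I$, is never actually needed).
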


The notion of generalized $k$-circular idempotents was introduced by Kumar, Abu Baker, and Botelho in \cite{HAF2}. In the same paper, the authors also introduced several new classes of idempotent maps and studied their properties. 

Motivated by these works, in this paper, we completely describe the structure of $GBCI$ on $C^1[0,1]$, the space of all continuously differentiable complex-valued functions on the unit interval $[0,1]$ with respect to the norm $ \|f\|_{\sigma} = |f(0)| +\|f'\|_\infty$. Our first result shows that any $GBCI$ is Lipschitz continuous.

\begin{prop}
Let $\mathcal{C} = \{P_1, P_2\}$ be a collection of generalized bi-circular idempotents on a normed space $X$. Then $P_1, P_2$ are Lipschitz continuous.
\end{prop}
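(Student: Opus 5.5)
Since $P_1+P_2=I$ and $T=\lambda_1P_1+\lambda_2P_2$ is an isometry for some distinct unimodular $\lambda_1,\lambda_2$, I will express $P_1$ and $P_2$ as affine combinations of $I$ and $T$. From $P_2=I-P_1$ we get $T=\lambda_1P_1+\lambda_2(I-P_1)=(\lambda_1-\lambda_2)P_1+\lambda_2 I$. Because $\lambda_1\neq\lambda_2$, this can be solved pointwise for $P_1$:
\begin{equation*}
P_1=\frac{1}{\lambda_1-\lambda_2}\,(T-\lambda_2 I), \qquad P_2=I-P_1=\frac{1}{\lambda_2-\lambda_1}\,(T-\lambda_1 I).
\end{equation*}
These identities hold for every $x\in X$ and need no linearity of $P_1$ or $P_2$, since they only use $P_1x+P_2x=x$ and the definition of $Tx$.

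Next, for $x,y\in X$ I will use the triangle inequality together with the fact that $T$ is an isometry:
\begin{equation*}
\|P_1x-P_1y\|\le \frac{\|Tx-Ty\|+|\lambda_2|\,\|x-y\|}{|\lambda_1-\lambda_2|}=\frac{2}{|\lambda_1-\lambda_2|}\,\|x-y\|.
\end{equation*}
The same estimate holds for $P_2$. So both maps are Lipschitz with constant $2/|\lambda_1-\lambda_2|$, which is finite because the $\lambda_j$ are distinct.

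There is no real obstacle here. The only point to check is that the definition of $GBCI$ requires $T$ to be an isometry in the metric sense $\|Tx-Ty\|=\|x-y\|$, and not merely norm-preserving. This is exactly the definition in the introduction, so neither Proposition \ref{T0=0} nor Lemma \ref{TP=lambdaP} is needed. The hypotheses $P_1P_2=P_2P_1=0$ and idempotency are likewise not used.
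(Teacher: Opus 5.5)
Your proposal is correct and follows essentially the same route as the paper. Like the paper, you write $P_i=(T-\lambda_jI)/(\lambda_i-\lambda_j)$ using only $P_1+P_2=I$, and then combine the triangle inequality with the isometry property to get the Lipschitz constant $2/|\lambda_1-\lambda_2|$, without using linearity, idempotency, or surjectivity.
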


\begin{proof} Suppose that $ \mathcal{C} = \{P_1, P_2\}$ is a collection of generalized bi-circular idempotents corresponding to a surjective isometry $T$ on $X$. Then $T = \lambda_1P_1 +  \lambda_2P_2$ for some distinct $\lambda_1$, $\lambda_2 \in \mathbb{T}$. Now, for any $x,y \in X$, and $i,j = 1, 2$, $i \neq j$, we have
\begin{align*}
\|P_i x-P_i y\| &= \Bigg\|\Bigg[\frac{Tx - \lambda_j x}{\lambda_i - \lambda_j}\Bigg]- \Bigg[\frac {Ty - \lambda_j y}{\lambda_i - \lambda_j}\Bigg] \Bigg\|  \\
& = \frac{1}{|\lambda_i - \lambda_j|} \Big\|[Tx-Ty] - \lambda_j [x - y] \Big\| \\
& \leq \frac{1}{|\lambda_i - \lambda_j|} \Big[\|x-y\| + |\lambda_j| \|x - y \| \Big] \\
& = \frac{2}{|\lambda_i - \lambda_j|} \ \|x-y\|.
\end{align*}
Therefore, $P_i$, $i = 1,2$, is Lipschitz continuous. 
\end{proof} 

The structure of surjective isometries on space $C^1[0,1]$, equipped with norm $\|f \|_{\sigma} = |f(0)| + \|f'\|_\infty $ is given in the following result by Miura. 
		
\begin{thm} \cite{M}
Let $T: C^1[0,1] \to C^1[0,1]$ be a surjective isometry, which need not be linear, with respect to the norm $\|f \|_{\sigma} = |f(0)| + \|f'\|_\infty $. Then there exists a constant $c\in \mathbb{C}$ with $|c| =1$, a continuous unimodular function $\beta : [0,1] \to \mathbb{C}$ and a homeomorphism $\phi: [0,1] \to [0,1]$ such that 
\begin{equation*}
T(f)(t) = T(0)(t) + cf(0) + \int_{0}^{t} \beta(s) f'(\phi(s)) ds, \tag*{(Form I)} \text{ or }
\end{equation*}
\begin{equation*}
T(f)(t) = T(0)(t) + c\overline{f(0)} + \int_{0}^{t} \beta(s) f'(\phi(s)) ds, \tag*{(Form II)} \text{ or }
\end{equation*}
\begin{equation*}
T(f)(t) = T(0)(t) + cf(0) + \int_{0}^{t} \beta(s) \overline{ f'(\phi(s))}ds, \tag*{(Form III)} \text{ or }
\end{equation*}
\begin{equation*}
T(f)(t) = T(0)(t) + c\overline{f(0)} + \int_{0}^{t} \beta(s) \overline{ f'(\phi(s))} ds , \tag*{(Form IV)}
\end{equation*}
for every $f \in C^1 [0,1]$ and $t \in [0,1]$. Conversely, each of the above maps is a surjective isometry on $C^1[0,1]$ with respect to the norm $\|.\|_{\sigma}$, where $T(0)$ is an arbitrary element of $C^1[0,1]$.
\end{thm}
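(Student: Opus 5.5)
The strategy is to reduce $T$ to a real-linear isometry and then read off its form from the extreme points of the dual unit ball. First, by the Mazur--Ulam theorem the map $S(f) = T(f) - T(0)$ is a surjective real-linear isometry of $(C^1[0,1],\|\cdot\|_\sigma)$. It therefore suffices to show that $S$ has one of the four stated forms with $T(0)$ removed.

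Next, consider the map $f \mapsto (f(0), f')$. It is a surjective isometry from $C^1[0,1]$ onto the $\ell^1$-sum $\mathbb{C}\oplus_1 C[0,1]$, since every $g\in C[0,1]$ is the derivative of some $C^1$ function with a prescribed value at $0$. Alternatively, put $X=\mathbb{T}\times\mathbb{T}\times[0,1]$ and $\tilde f(\alpha,\gamma,t)=\alpha f(0)+\gamma f'(t)$. Then $\|f\|_\sigma=\sup_X|\tilde f|$, so $f\mapsto\tilde f$ is a real-linear isometric embedding into $C(X)$.

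I then pass to the real dual. The extreme points of the unit ball of the real dual of $\mathbb{C}\oplus_1 C[0,1]$ are exactly the functionals
\[
\Lambda_{\alpha,\gamma,t}(f)=\mathrm{Re}\bigl(\alpha f(0)+\gamma f'(t)\bigr),\qquad \alpha,\gamma\in\mathbb{T},\ t\in[0,1].
\]
This holds because the dual is the $\ell^\infty$-sum of $\mathbb{C}$ and $M[0,1]$ (taking real parts), and extreme points of an $\ell^\infty$-sum ball are pairs of extreme points. Since $S$ is a surjective real-linear isometry, $S^*$ permutes these extreme points. This gives a bijection $(\alpha,\gamma,t)\mapsto(a,b,s)$ with
\[
\mathrm{Re}\bigl(\alpha\, Sf(0)+\gamma\, (Sf)'(t)\bigr)=\mathrm{Re}\bigl(a f(0)+ b f'(s)\bigr)\quad\text{for all } f.
\]

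The main obstacle is to show that the two coordinates do not mix: $a$ should depend only on $\alpha$, while $b$ and $s$ should depend only on $(\gamma,t)$. I would attack this by subtracting the identities for $(\alpha,\gamma,t)$ and $(-\alpha,\gamma,t)$. The left sides differ by $2\,\mathrm{Re}(\alpha\, Sf(0))$, which is a functional of norm $2$ that is a difference of two extreme points. The right sides give $\mathrm{Re}(af(0)+bf'(s))-\mathrm{Re}(a'f(0)+b'f'(s'))$. Testing this on functions with $f(0)=0$ and $f'$ peaking at $s$ or $s'$, I aim to force $b=b'$ and $s=s'$. This shows that $f\mapsto\mathrm{Re}(\alpha\, Sf(0))$ depends only on $f(0)$, and symmetrically that $\mathrm{Re}(\gamma(Sf)'(t))$ depends only on $f'(s)$. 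Here I would argue that $\mathrm{Re}(\alpha\,Sf(0))$ has norm $1$ and equals $\tfrac12(\Lambda-\Lambda')$ for two extreme points; the only such representations come from two points sharing the $C[0,1]$-part. If that step resists, I would instead use the $L$-summand structure. In the $\ell^1$-sum, $\mathbb{C}\oplus 0$ is the unique nontrivial finite-dimensional $L$-summand, since $C[0,1]$ has no nontrivial $L$-summands. Hence $S$ must preserve the decomposition $\mathbb{C}\oplus_1 C[0,1]$.

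Once the splitting is established, $f(0)\mapsto Sf(0)$ is a real-linear isometry of $\mathbb{C}$, hence equal to $c z$ or $c\bar z$ with $|c|=1$. The induced map $f'\mapsto (Sf)'$ is a surjective real-linear isometry of $C[0,1]$. By the real-linear Banach--Stone theorem (Ellis, or Kadison for the real case) it has the form $g\mapsto \beta\,(g\circ\phi)$ or $\beta\,\overline{g\circ\phi}$. Here $\beta$ is continuous and unimodular and $\phi$ is a homeomorphism, and connectedness of $[0,1]$ makes the choice between the two forms global. Integrating from $0$ to $t$ gives Forms I--IV. The converse is a direct computation: each of the four maps preserves $|f(0)-g(0)|$ and $\|f'-g'\|_\infty$, and it is surjective because the data can be inverted using $\phi^{-1}$, $\bar\beta$ and $\bar c$.
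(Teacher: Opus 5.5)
The paper gives no proof of this statement. It is quoted from \cite{M} and used as a black box, so there is no in-paper argument to compare with, and I assess your proposal on its own.

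The architecture is sound. Mazur--Ulam makes $S=T-T(0)$ real-linear, and $f\mapsto(f(0),f')$ is an isometry onto $\mathbb{C}\oplus_1 C[0,1]$. Once you know that $S$ maps the constants onto the constants and $\{f : f(0)=0\}$ onto itself, three further ingredients give Forms I--IV: the real-linear Banach--Stone theorem for complex $C(K)$ (this is Ellis's theorem; Kadison's is the complex-linear C*-algebra result), connectedness of $[0,1]$, and integration. The converse is as you describe.

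The splitting step as you first propose it rests on a false claim. A norm-one functional $\tfrac12(\Lambda-\Lambda')$ with $\Lambda,\Lambda'$ extreme need not come from two points sharing the $C[0,1]$-part. For $s\neq s'$, the functional $\tfrac12(\Lambda_{a,b,s}-\Lambda_{a,b,s'})(f)=\tfrac12\mathrm{Re}\bigl(b(f'(s)-f'(s'))\bigr)$ also has norm $1$.

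So the norm of $f\mapsto\mathrm{Re}(\alpha\,Sf(0))$ alone cannot force $b=b'$ and $s=s'$. Testing against peaking functions with $f(0)=0$ does not help either. It only relates $\mathrm{Re}(\alpha\,Sf(0))$ to $\mathrm{Re}\bigl(bf'(s)-b'f'(s')\bigr)$, and that the former vanishes for such $f$ is exactly what you are trying to prove.

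A repair must use that $\psi_\alpha:=\tfrac12(S^*\Lambda_{\alpha,\gamma,t}-S^*\Lambda_{-\alpha,\gamma,t})$ is the same functional for every $(\gamma,t)$.
\begin{itemize}
\item Suppose the measure part of $\psi_\alpha$ were nonzero. By real-linearity in $\alpha$ and weak$^*$ continuity, every $S^*\Lambda_{\alpha,\gamma,t}$ would then have its point $s$ in one fixed finite set. This contradicts the fact that $S^*$ maps the extreme points onto all extreme points.
\item Once the measure part vanishes, the scalar parts satisfy $u(\alpha)+v(\gamma,t)\in\mathbb{T}$ for all $\alpha\in\mathbb{T}$, with $u$ real-linear and injective. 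This forces $v\equiv 0$ and $|u(\alpha)|=|\alpha|$.
\end{itemize}

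Your fallback via $L$-summands is correct and completes the proof, but two of its ingredients are only asserted.

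First, you need every $L$-projection $Q$ of $\mathbb{C}\oplus_1 C[0,1]$ to be one of $0,P_1,I-P_1,I$, where $P_1$ projects onto the constants. This requires Cunningham's theorem that $L$-projections commute, so that $QP_1$ and $Q(I-P_1)$ are $L$-projections of the two summands. The summand $\mathbb{C}$ has only $0$ and $I$, by strict convexity.

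Second, the claim that complex $C[0,1]$, viewed as a real space, has no nontrivial $L$-projection is true but needs an argument. Here is one.
\begin{itemize}
\item If $P$ were such a projection, $(2P-I)^*$ would permute the extreme functionals $\mathrm{Re}(\gamma\delta_t)$, fixing none and negating none.
\item Real-linearity in $\gamma$ forces it to act as $\mathrm{Re}(w\delta_t)\mapsto\mathrm{Re}(L_t(w)\delta_{\tau(t)})$, with $\tau$ continuous on $[0,1]$.
\item At a fixed point $t_0$ of $\tau$, the real-linear isometric involution $L_{t_0}$ of $\mathbb{C}$ fixes or negates some unimodular $w$, which is a contradiction.
\end{itemize}
With these filled in, $SP_1S^{-1}$ is an $L$-projection with two-dimensional range, hence equals $P_1$, and the rest of your argument goes through.
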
		

\section{Isometric reflection}

In this section, we give characterizations of isometric reflections on $C^1[0,1]$.

\begin{thm}
Let $T$ be an isometric reflection on $C^1[0,1]$ of Form $(I)$. Then $c = \pm 1$ with $\beta(t) \beta(\phi (t)) =1$ and $\phi^2(t) =t$ for every $t \in [0,1]$.
\end{thm}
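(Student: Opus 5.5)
We will work directly from Miura's representation of Form (I) and compute $T^2$. First we pin down $T(0)$: since $T^2=I$, applying $T^2$ to $0$ gives $T(T(0))=0$. Write $g_0=T(0)$. Note that, unlike in Proposition \ref{T0=0}, we are not in the GBCI setting, so we cannot assume $g_0=0$. We simply carry $g_0$ through the computation.

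For $f\in C^1[0,1]$, set $h=T(f)$. Then $h(0)=g_0(0)+cf(0)$ and $h'(s)=g_0'(s)+\beta(s)f'(\phi(s))$. Hence
\begin{align*}
T^2(f)(t) &= g_0(t) + c\,h(0) + \int_0^t \beta(s)\, h'(\phi(s))\,ds \\
&= g_0(t) + c\,g_0(0) + c^2 f(0) + \int_0^t \beta(s)\Big[g_0'(\phi(s)) + \beta(\phi(s))\, f'(\phi^2(s))\Big]ds .
\end{align*}
The condition $T^2 f=f$ is an identity in $f$. We compare it at $t=0$ and after differentiating in $t$.

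At $t=0$ we get $g_0(0)(1+c)+c^2f(0)=f(0)$ for all $f$. Taking $f=0$ gives $g_0(0)(1+c)=0$. Subtracting this from the general identity gives $c^2f(0)=f(0)$ for all $f$, so $c^2=1$, that is, $c=\pm1$.

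Differentiating in $t$ gives, for every $t$,
\[
g_0'(t)+\beta(t)g_0'(\phi(t))+\beta(t)\beta(\phi(t))f'(\phi^2(t))=f'(t).
\]
Taking $f=0$ kills the $g_0$ terms. Subtracting, we obtain $\beta(t)\beta(\phi(t))\,f'(\phi^2(t))=f'(t)$ for all $f$ and $t$. Choosing $f(x)=x$, so $f'\equiv1$, yields $\beta(t)\beta(\phi(t))=1$. The identity then reduces to $f'(\phi^2(t))=f'(t)$. With $f(x)=x^2/2$, so $f'(x)=x$, this gives $\phi^2(t)=t$.

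The only potential obstacle is the unknown $T(0)$. We handle it by comparing $f$ with $f=0$ to cancel the affine terms rather than trying to prove $T(0)=0$. The statement does not ask about $g_0$, so the remaining constraints on it ($g_0(0)(1+c)=0$ and $g_0'+\beta\,g_0'\circ\phi=0$) can be recorded as a byproduct and are not needed for the conclusion.
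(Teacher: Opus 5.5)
Your proof is correct and follows the paper's argument: expand $T^2 f$ using Form (I), cancel the $T(0)$ terms by subtracting the $f=0$ case, read off $c^2=1$, and differentiate to reach $\beta(t)\beta(\phi(t))f'(\phi^2(t))=f'(t)$. The only difference is in the choice of test functions: you use $f(x)=x$ and $f(x)=x^2/2$ to get $\beta(t)\beta(\phi(t))=1$ and then $\phi^2(t)=t$ directly, whereas the paper argues by contradiction with a function satisfying $f'(t)=1$ and $f'(\phi^2(t))=0$.
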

\begin{proof}
An isometry on $C^1[0,1]$ of Form $(I)$ is described by	
\begin{equation*} \label{Form1}
T(f)(t)= T(0)(t)+  cf(0) + \int_{0}^{t} \beta(s)f'(\phi(s))ds
\end{equation*}
for every $f \in C^1[0,1]$ and $t \in [0,1]$. Consequently, we obtain 
\begin{align*}
T(f)(0) = T(0)(0) + cf(0)   \text{ and }  (T(f))'(t)= (T(0))'(t)+ \beta(t) f'(\phi(t)).
\end{align*}
If $T$ is an isometric reflection, then $T^2(f)(t)= f(t)$ for every $f \in C^1[0,1]$ and $t \in [0,1]$. This implies that
\begin{equation*}
T(0)(t) + c(T(f))(0)  + \int_{0}^{t} \beta(s) (T(f))'(\phi(s))ds = f(t).
\end{equation*}
Putting the values of $T(f)(0)$ and $(T(f))'(\phi(s))$, we get	
\begin{equation} \label{mainref1}
T(0)(t) + cT(0)(0) + c^2f(0) +  \int_{0}^{t} \Big[ \beta(s) T(0)'(\phi(s))  + \beta(s) \beta(\phi(s)) f'(\phi^2(s)) \Big] ds = f(t). 
\end{equation}
Choosing $f= 0$, we obtain	
\begin{equation*} \label{int1}
T(0)(t) + cT(0)(0) +  \int_{0}^{t} \beta(s) (T(0))'(\phi(s)) ds = 0.
\end{equation*}
Therefore, Equation \eqref{mainref1} reduces to	
\begin{equation}\label{ime1}
c^2f(0) +  \int_{0}^{t} \beta(s) \beta(\phi(s)) f'(\phi^2(s)) ds = f(t)
\end{equation}	
for every $f \in C^1[0,1]$ and $ t\in [0,1]$. Again, choosing $f = 1$, we get $c^2 =1$, and hence $c =\pm 1$.

Differentiating Equation \eqref{ime1}, we get 
\begin{equation*}
\beta(t) \beta(\phi(t)) f'(\phi^2(t)) = f'(t)
\end{equation*}
for every $f \in C^1[0,1]$ and $t \in [0,1]$. Now, take a function $f\in C^1[0,1]$ such that $f'(t)=1$ and $f'(\phi^2(t))= 0$, we get a contradiction. Hence, $\phi^2(t) =t$ for every $t \in [0,1]$. Moreover $\beta(t) \beta(\phi(t)) =1$ and $\phi^2(t) =t$ for every $t \in [0,1]$.
\end{proof}

\begin{thm}
Let $T$ be an isometric reflection on $C^1[0,1]$ of Form $(II)$. Then $\beta(t) \beta(\phi (t)) =1$ and $\phi^2(t) =t$ for every $t \in [0,1]$.
\end{thm}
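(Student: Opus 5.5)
We follow the same strategy as for Form $(I)$. Write $T(f)(t)= T(0)(t)+ c\overline{f(0)} + \int_0^t \beta(s) f'(\phi(s))\,ds$, so that
\begin{align*}
T(f)(0) = T(0)(0) + c\overline{f(0)} \quad\text{and}\quad (T(f))'(t) = (T(0))'(t) + \beta(t) f'(\phi(t)).
\end{align*}
Next we compute $T^2(f)$ by substituting $T(f)$ for $f$ in the formula for $T$, and impose $T^2(f)=f$. This gives
\begin{equation*}
T(0)(t) + c\overline{T(0)(0)} + |c|^2 f(0) + \int_0^t \Big[\beta(s)(T(0))'(\phi(s)) + \beta(s)\beta(\phi(s)) f'(\phi^2(s))\Big] ds = f(t),
\end{equation*}
where we have used $c\,\overline{c\overline{f(0)}} = |c|^2 f(0) = f(0)$. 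Since $|c|=1$, the constant term takes care of itself, which is why no condition on $c$ appears in the statement.

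Taking $f=0$ kills all terms involving $T(0)$. We are then left with $f(0) + \int_0^t \beta(s)\beta(\phi(s)) f'(\phi^2(s))\,ds = f(t)$ for all $f$ and $t$. Differentiating in $t$ yields
\begin{equation*}
\beta(t)\beta(\phi(t))\, f'(\phi^2(t)) = f'(t) \quad \text{for all } f\in C^1[0,1],\ t\in[0,1].
\end{equation*}

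The main step is to extract $\phi^2(t)=t$ from this identity. Suppose $\phi^2(t_0)\neq t_0$ for some $t_0$. We choose $f$ with $f'$ a continuous bump function satisfying $f'(t_0)=1$ and $f'(\phi^2(t_0))=0$, for instance $f(x)=\int_0^x g$ with $g$ continuous and vanishing at $\phi^2(t_0)$. This contradicts the identity, so $\phi^2=\mathrm{id}$. Putting $f(x)=x$ (so $f'\equiv 1$) then gives $\beta(t)\beta(\phi(t))=1$ for every $t$. The only point requiring care is the conjugation bookkeeping in the constant term. There is no real obstacle, since that term reduces to $|c|^2 f(0)$ without any constraint on $c$ beyond $|c|=1$.
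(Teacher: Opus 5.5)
Your proof is correct and follows the paper's argument. You expand $T^2(f)=f$, cancel the $T(0)$-terms by a specialization of $f$, differentiate to get $\beta(t)\beta(\phi(t))f'(\phi^2(t))=f'(t)$, and test against suitable $f$. Your handling of the constant term, $c\,\overline{c\overline{f(0)}}=|c|^2f(0)=f(0)$, is actually more accurate than the paper's, which writes $\overline{f(0)}$ there (harmless after differentiation), and your use of $f(x)=x$ makes the final identity $\beta(t)\beta(\phi(t))=1$ explicit.
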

\begin{proof}
	
An isometry on $C^1[0,1]$ of Form $(II)$ is described by
\begin{equation*} \label{Form2}
T(f)(t)= T(0)(t)+  c\overline{f(0)} + \int_{0}^{t} \beta(s)f'(\phi(s))ds
\end{equation*}
for every $f \in C^1[0,1]$ and $t \in [0,1]$. Thus, 
\begin{equation*}
T(f)(0) = T(0)(0) + c\overline{f(0)} \text{ and } (T(f))'(t)= (T(0))'(t)+ \beta(t) f'(\phi(t)).
\end{equation*}
If $T$ is an isometric reflection, then $T^2(f)(t)= f(t)$ for every $f \in C^1[0,1]$ and $t \in [0,1]$. It follows that	
\begin{equation*}
T(0)(t)+ c\overline{(T(f))(0)}  + \int_{0}^{t} \beta(s) (T(f))'(\phi(s))ds = f(t)
\end{equation*}
or	
\begin{equation} \label{mainref2}
T(0)(t)+ c\overline{ T(0)(0)} + \overline{f(0)} +   \int_{0}^{t} \Big[ \beta(s)  T(0)'(\phi(s))  + \beta(s)  \beta(\phi(s)) f'(\phi^2(s)) \Big] ds  = f(t). 
\end{equation}
Choosing $f =1$, we get
\begin{equation*} \label{ilt2}
T(0)(t)+ c\overline{ T(0)(0)} + \int_{0}^{t}  \beta(s)  T(0)'(\phi(s)) ds =0.
\end{equation*}
Thus, Equation \eqref{mainref2} reduces to
\begin{equation*} \label{ime2}
\overline{f(0)} +   \int_{0}^{t}  \beta(s)  \beta(\phi(s)) f'(\phi^2(s)) ds  = f(t). 
\end{equation*}
After differentiating this equation, we get	
\begin{equation}
\beta(t)  \beta(\phi(t)) f'(\phi^2(t)) ds  = f'(t) 
\end{equation}
for every $f \in C^1[0,1]$ and $t \in [0,1]$. Take a function $f\in C^1[0,1]$ such that $f'(t)=1$ and $f'(\phi^2(t))= 0$, this leads to a contradiction. It follows that $\beta(t) \beta(\phi(t)) =1$ and $\phi^2(t) =t$ for every $t \in [0,1]$.
\end{proof}

\begin{thm}
Let $T$ be an isometric reflection on $C^1[0,1]$ of Form $(III)$. Then $c = \pm 1$ with $\beta(t) \overline{\beta(\phi (t))} =1$ and $\phi^2(t) =t$ for every $t \in [0,1]$.
\end{thm}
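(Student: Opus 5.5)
The plan is to follow the same scheme as the proofs for Forms (I) and (II): compose $T$ with itself, separate out the contribution of $T(0)$, and then compare the constant terms and the derivatives. For Form (III) we have
\[
T(f)(t)=T(0)(t)+cf(0)+\int_0^t \beta(s)\overline{f'(\phi(s))}\,ds,
\]
which gives $T(f)(0)=T(0)(0)+cf(0)$ and $(T(f))'(t)=T(0)'(t)+\beta(t)\overline{f'(\phi(t))}$. Substituting these into $T^2(f)=f$ yields
\[
T(0)(t)+cT(0)(0)+c^2f(0)+\int_0^t\Big[\beta(s)\overline{T(0)'(\phi(s))}+\beta(s)\overline{\beta(\phi(s))}\,f'(\phi^2(s))\Big]ds=f(t).
\]
The conjugate on the inner derivative is undone by the outer conjugate. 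Also, $T(f)(0)$ enters the outer application of $T$ without conjugation, so the constant term stays $c^2f(0)$.

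Setting $f=0$ shows that the terms involving only $T(0)$ sum to zero, since $T^2(0)=0$. What remains is
\[
c^2f(0)+\int_0^t\beta(s)\overline{\beta(\phi(s))}f'(\phi^2(s))\,ds=f(t)
\]
for all $f$ and $t$. Taking $f=1$ gives $c^2=1$, so $c=\pm1$. Differentiating then gives
\[
\beta(t)\overline{\beta(\phi(t))}\,f'(\phi^2(t))=f'(t)\qquad\text{for all } f\in C^1[0,1],\ t\in[0,1].
\]

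The main point is to justify $\phi^2(t)=t$. Suppose $\phi^2(t_0)\neq t_0$ for some $t_0$. Choose $f$ with $f'$ continuous, $f'(t_0)=1$ and $f'(\phi^2(t_0))=0$; for example, let $f'$ be a continuous bump near $t_0$ that vanishes at $\phi^2(t_0)$, and take $f(t)=\int_0^t f'$. This contradicts the identity above, so $\phi^2(t)=t$ for every $t$. Finally, choose $f(t)=t$, so that $f'\equiv1$; the identity then becomes $\beta(t)\overline{\beta(\phi(t))}=1$ for all $t\in[0,1]$.

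No step is a serious obstacle. The only thing to handle carefully is the bookkeeping of conjugates in the composition, in particular checking that $\overline{\overline{f'}}=f'$ and that the factor $\beta(\phi(s))$ appears conjugated.
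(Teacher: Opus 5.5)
Your proposal is correct and follows the paper's proof essentially step for step. As there, you expand $T^2(f)=f$, cancel the $T(0)$-terms by taking $f=0$, read off $c^2=1$ from $f=1$, and differentiate to get $\beta(t)\overline{\beta(\phi(t))}f'(\phi^2(t))=f'(t)$, from which $\phi^2(t)=t$ follows by choosing $f'$ with $f'(t_0)=1$ and $f'(\phi^2(t_0))=0$. Your explicit choice $f(t)=t$ to extract $\beta(t)\overline{\beta(\phi(t))}=1$ simply makes precise the last step, which the paper leaves implicit.
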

\begin{proof}
An isometry $T$ on $C^1[0,1]$ of Form $(III)$ is described by
\begin{equation*} \label{Form3}
T(f)(t)= T(0)(t)+  cf(0) + \int_{0}^{t} \beta(s) \overline{f'(\phi(s))} ds
\end{equation*}
for every $f \in C^1[0,1]$ and $t \in [0,1]$. This implies that
\begin{equation*}
T(f)(0) = T(0)(0) + cf(0) \text{ and } (T(f))'(t)= (T(0))'(t)+ \beta(t) \overline{f'(\phi(t))}.
\end{equation*}
Since $T^2(f)(t)= f(t)$ for every $f \in C^1[0,1]$ and $t \in [0,1]$, we have	
\begin{equation*}
T(0)(t)+ c(T(f))(0)  + \int_{0}^{t} \beta(s) \overline{(T(f))'(\phi(s))}ds = f(t).
\end{equation*}
Thus
\begin{equation} \label{mainref3}
T(0)(t)+ cT(0)(0) + c^2f(0) +   \int_{0}^{t} \Big[ \beta(s)  \overline{T(0)'(\phi(s))}  + \beta(s) \overline{ \beta(\phi(s))} f'(\phi^2(s)) \Big] ds  = f(t). 
\end{equation}
Setting $f=0$,
\begin{equation*} \label{ilt3}
T(0)(t)+ cT(0)(0) + \int_{0}^{t}  \beta(s)  \overline{T(0)'(\phi(s))} ds =0.
\end{equation*}
So, Equation \eqref{mainref3} reduces to 
\begin{equation} \label{ime3}
c^2f(0) +   \int_{0}^{t}  \beta(s)  \overline{\beta(\phi(s))} f'(\phi^2(s)) ds  = f(t)
\end{equation}
for every $f \in C^1[0,1]$ and $t \in [0,1]$. Select $f=1$, we get $c^2 =1$, hence $c =\pm 1$.
	
Differentiating Equation \eqref{ime3}, we have
\begin{equation*}
\beta(t)  \overline{\beta(\phi(t))} f'(\phi^2(t)) = f'(t)
\end{equation*}
for every $f \in C^1[0,1]$ and $t \in [0,1]$. Again, choose $f\in C^1[0,1]$ satisfying $f'(t)=1$ and $f'(\phi^2(t))= 0$, this yields a contradiction. Consequently, $\beta(t) \overline{\beta(\phi(t))} =1$ and $\phi^2(t) =t$ for every $t \in [0,1]$.
\end{proof}
\begin{thm}
Let $T$ be an isometric reflection on $C^1[0,1]$ of Form $(IV)$. Then  $\beta(t) \overline{\beta(\phi (t))} =1$ and $\phi^2(t) =t$ for every $t \in [0,1]$.
\end{thm}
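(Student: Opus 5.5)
We follow the same scheme as in the proofs for Forms (I)--(III): compute $T^2$ explicitly and compare with the identity. For an isometry of Form (IV),
\begin{equation*}
T(f)(t)= T(0)(t)+ c\overline{f(0)} + \int_{0}^{t} \beta(s) \overline{f'(\phi(s))} ds ,
\end{equation*}
so $T(f)(0) = T(0)(0) + c\overline{f(0)}$ and $(T(f))'(t) = (T(0))'(t) + \beta(t)\overline{f'(\phi(t))}$. Substituting these into $T^2(f)(t) = f(t)$ gives
\begin{equation*}
T(0)(t) + c\overline{T(0)(0)} + |c|^2 f(0) + \int_0^t \Big[\beta(s)\overline{T(0)'(\phi(s))} + \beta(s)\overline{\beta(\phi(s))} f'(\phi^2(s))\Big] ds = f(t).
\end{equation*}
Here the two conjugations cancel on both the constant term and the derivative term, and $|c|^2 = 1$. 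This is why no restriction on $c$ arises, in analogy with Form (II).

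Next we put $f = 0$. This shows that $T(0)(t) + c\overline{T(0)(0)} + \int_0^t \beta(s)\overline{T(0)'(\phi(s))}\,ds = 0$, since $T(0)$ is fixed by $T^2 = I$. Subtracting this from the identity above leaves
\begin{equation*}
f(0) + \int_0^t \beta(s)\overline{\beta(\phi(s))} f'(\phi^2(s))\, ds = f(t)
\end{equation*}
for all $f \in C^1[0,1]$ and $t \in [0,1]$. Differentiating in $t$ gives
\begin{equation*}
\beta(t)\overline{\beta(\phi(t))}\, f'(\phi^2(t)) = f'(t).
\end{equation*}

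The only step needing care is extracting $\phi^2 = \mathrm{id}$ from this identity. Suppose $\phi^2(t_0) \ne t_0$ for some $t_0$. Choose a continuous $g$ on $[0,1]$ with $g(t_0) = 1$ and $g(\phi^2(t_0)) = 0$, for instance a suitable affine or bump function. Set $f(t) = \int_0^t g$. Then the left side vanishes at $t_0$ while the right side equals $1$, a contradiction. Hence $\phi^2(t) = t$ for all $t$. Taking $f(t) = t$, so that $f' \equiv 1$, then gives $\beta(t)\overline{\beta(\phi(t))} = 1$ for every $t \in [0,1]$, which completes the proof.
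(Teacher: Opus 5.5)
Your proof is correct and follows the paper's argument step for step: expand $T^2 f = f$, subtract the $f=0$ instance (which is simply $T^2(0)=0$), differentiate, and use a test function whose derivative separates $t_0$ from $\phi^2(t_0)$. Your constant term $|c|^2 f(0)=f(0)$ is the correct one (the paper's displayed $\overline{f(0)}$ is a slip that disappears on differentiation), and your choice $f(t)=t$ makes explicit the step $\beta(t)\overline{\beta(\phi(t))}=1$ that the paper leaves implicit.
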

\begin{proof}
An isometry $T$ on $C^1[0,1]$ of Form $(IV)$ is described by 
\begin{equation*} \label{Form4}
T(f)(t)= T(0)(t)+  c \overline{f(0)} + \int_{0}^{t} \beta(s) \overline{f'(\phi(s))} ds
\end{equation*}
for every $f \in C^1[0,1]$ and $t \in [0,1]$. It follows that
\begin{equation*}
T(f)(0) = T(0)(0) + c\overline{f(0)} \text{ and } (T(f))'(t)= (T(0))'(t)+ \beta(t) \overline{f'(\phi(t))}.
\end{equation*}
Further, $T^2(f)(t)= f(t)$ gives
\begin{equation*}
T(0)(t)+ c \overline{(T(f))(0)}  + \int_{0}^{t} \beta(s) \overline{(T(f))'(\phi(s))}ds = f(t).
\end{equation*}
Thus,
\begin{equation} \label{mainref4}
T(0)(t)+ c \overline{T(0)(0)} + \overline{f(0)} +   \int_{0}^{t} \Big[ \beta(s)  \overline{T(0)'(\phi(s))}  + \beta(s) \overline{ \beta(\phi(s))} f'(\phi^2(s)) \Big] ds  = f(t). 
\end{equation}
Taking $f=0$, we get
\begin{equation*} \label{int4}
T(0)(t)+ c\overline{T(0)(0)} + \int_{0}^{t}  \beta(s)  \overline{T(0)'(\phi(s))} ds =0.
\end{equation*}
Therefore, Equation \eqref{mainref4} takes the form
\begin{equation} \label{ime4}
\overline{f(0)} +   \int_{0}^{t}  \beta(s)  \overline{\beta(\phi(s))} f'(\phi^2(s)) ds  = f(t) 
\end{equation}
for every $f \in C^1[0,1]$ and $t \in [0,1]$. Differentiating Equation \eqref{ime4}, 
\begin{equation*}
\beta(t)  \overline{\beta(\phi(t))} f'(\phi^2(t))  = f'(t) 
\end{equation*}
for every $f \in C^1[0,1]$ and $t \in [0,1]$. Choose $f\in C^1[0,1]$ such that $f'(t)=1$ and $f'(\phi^2(t))= 0$, this gives a contradiction. Therefore, $\beta(t) \overline{\beta(\phi(t))} =1$ and $\phi^2(t) =t$ for every $t \in [0,1]$.
\end{proof}

\section{Structures of Generalized bi-circular idempotents }

Now, we characterize generalized bi-circular idempotents on $C^1[0,1]$.
		
\begin{thm} \label{I_1}
The collection $\mathcal{C}= \{P_1, P_2\}$ is a family of  generalized bi-circular idempotents on $C^1[0,1]$ corresponding to an isometry $T$ of Form $(I)$ if and only if one of the following conditions is satisfied:
\begin{enumerate}
\item  Each $P_i$ is the average of the identity operator and isometric reflection with $\lambda_1+\lambda_2=0, c= \pm \lambda_1$ and $\beta(t) \beta(\phi(t)) = \pm \lambda_1, \phi^2(t) =t$ for every $t \in [0,1]$.
\item $\mathcal{C}$ is a family of bi-circular projections.
\end{enumerate}
\end{thm}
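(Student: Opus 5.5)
We plan to reduce the problem to a functional identity for $T$ and then read off the parameters. Let $\mathcal{C}=\{P_1,P_2\}$ be a family of generalized bi-circular idempotents with $T=\lambda_1P_1+\lambda_2P_2$ of Form $(I)$.

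\emph{Step 1: reduction to a quadratic identity.} By Proposition \ref{T0=0}, $T(0)=0$ and $T$ is real linear. Lemma \ref{TP=lambdaP} gives $TP_i=\lambda_iP_i$. Applying $T$ to $T=\lambda_1P_1+\lambda_2P_2$ and using $P_1+P_2=I$ gives $T^2=\lambda_1^2P_1+\lambda_2^2P_2$. Hence
\[
T^2-(\lambda_1+\lambda_2)T+\lambda_1\lambda_2 I=0 .
\]
Conversely, if $T$ is a surjective isometry satisfying this identity with $\lambda_1\ne\lambda_2$, we set $P_1=(T-\lambda_2I)/(\lambda_1-\lambda_2)$ and $P_2=(T-\lambda_1 I)/(\lambda_2-\lambda_1)$. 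A direct check shows that these are idempotents with $P_1P_2=P_2P_1=0$ and $P_1+P_2=I$. So the theorem reduces to solving this identity for $T$ of Form $(I)$ with $T(0)=0$.

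\emph{Step 2: substitution into Form $(I)$.} Write $T f(t)=cf(0)+\int_0^t\beta(s)f'(\phi(s))\,ds$. Evaluating the identity at $t=0$ gives
\[
\bigl(c^2-(\lambda_1+\lambda_2)c+\lambda_1\lambda_2\bigr)f(0)=0,
\]
so $c\in\{\lambda_1,\lambda_2\}$. Differentiating the identity gives, for all $f$ and $t$,
\[
\beta(t)\beta(\phi(t))f'(\phi^2(t))-(\lambda_1+\lambda_2)\beta(t)f'(\phi(t))+\lambda_1\lambda_2 f'(t)=0 .
\]
Since $f'$ ranges over all of $C[0,1]$, we localize at a fixed $t$ by choosing $f'$ to be bump functions, as in the isometric-reflection theorems, and compare the points $t$, $\phi(t)$, $\phi^2(t)$. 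We split into two cases.

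\emph{Case A: $\lambda_1+\lambda_2\neq0$.} If $\phi(t)\neq t$ for some $t$, choose $f'$ vanishing at $t$ and $\phi^2(t)$ but equal to $1$ at $\phi(t)$. (If $\phi^2(t)=\phi(t)$, injectivity forces $\phi(t)=t$, so $\phi^2(t)\neq\phi(t)$.) This gives $(\lambda_1+\lambda_2)\beta(t)=0$, a contradiction. Hence $\phi=\mathrm{id}$. The identity then reads $(\beta(t)-\lambda_1)(\beta(t)-\lambda_2)=0$, so by continuity and connectedness of $[0,1]$, $\beta\equiv\lambda_1$ or $\beta\equiv\lambda_2$. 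Together with $c\in\{\lambda_1,\lambda_2\}$, this describes exactly the maps $Tf=c f(0)+\int_0^t \beta f'$ with $c,\beta$ constants in $\{\lambda_1,\lambda_2\}$. For these maps, $P_1,P_2$ split $f$ into its constant part and its $\int f'$ part, or are $0/I$; the latter is excluded because the $P_i$ are distinct and nonzero. The resulting $\mu_1P_1+\mu_2P_2$ is again of Form $(I)$ for any unimodular $\mu_1,\mu_2$. This is alternative (2), a bi-circular family.

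\emph{Case B: $\lambda_1+\lambda_2=0$.} The identity becomes $T^2=\lambda_1^2 I$, so $\overline{\lambda_1}T$ is an isometric reflection of Form $(I)$. Applying the first theorem of Section 2 to $\overline{\lambda_1}T$ yields $\overline{\lambda_1}c=\pm1$, $\phi^2=\mathrm{id}$, and $\overline{\lambda_1}^2\beta(t)\beta(\phi(t))=1$. Moreover $P_i=\tfrac12(I\pm\overline{\lambda_1}T)$ is an average of the identity and an isometric reflection, giving alternative (1). The stated normalization $\beta(t)\beta(\phi(t))=\pm\lambda_1$ should be matched to this after accounting for the paper's conventions. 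For the converse of (1), the stated relations make $\overline{\lambda_1}T$ an involution, so Step 1 applies.

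The main obstacle is Case A: the bump-function localization must handle the degenerate coincidences among $t$, $\phi(t)$, $\phi^2(t)$ carefully, and we must verify that the resulting family is genuinely bi-circular, i.e. that every $\mu_1P_1+\mu_2P_2$ is an isometry.
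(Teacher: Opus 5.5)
\textbf{Verdict: correct, by a different route from the paper.} Your argument holds, and it yields the \emph{correct} version of alternative (1). The statement as printed does not hold.

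\textbf{Comparison of routes.} The paper works entirely inside the differentiated identity. It first proves $\phi^2=\mathrm{id}$: if $t,\phi(t),\phi^2(t)$ were distinct, a bump at $t$ would force $\lambda_1\lambda_2=0$. Only afterwards does it show that $\phi\neq\mathrm{id}$ forces $\lambda_1+\lambda_2=0$.

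You split on $\lambda_1+\lambda_2$ first. When it is nonzero, a single bump at $\phi(t)$ gives $\phi=\mathrm{id}$ outright; injectivity separates $\phi(t)$ from $\phi^2(t)$. When it is zero, you pass $\overline{\lambda_1}T$ to the Form (I) reflection theorem of Section 2. Your route reuses that theorem and never needs a separate proof of $\phi^2=\mathrm{id}$ in the non-antipodal case. The paper's route is self-contained.

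Your bump is also the right one. The paper's choice $f'(t)=1$, $f'(\phi(t))=0$ (with $\phi^2(t)=t$) actually gives $\beta(t)\beta(\phi(t))+\lambda_1\lambda_2=0$, not $(\lambda_1+\lambda_2)\beta(t)=0$. The latter needs the bump at $\phi(t)$, as you have it.

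One small point in Step 1: writing $T(\lambda_iP_if)=\lambda_iTP_if$ needs complex homogeneity. Real linearity from Proposition~\ref{T0=0} does not give this. It holds here because Form (I) with $T(0)=0$ is complex linear, and you should say so.

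\textbf{Drop the hedge about normalization.} No convention reconciles your condition with the stated one. Putting $\lambda_2=-\lambda_1$ and $\phi^2=\mathrm{id}$ into the differentiated identity gives $\beta(t)\beta(\phi(t))=-\lambda_1\lambda_2=\lambda_1^2$. Moreover, $\lambda_1^2\in\{\pm\lambda_1\}$ only when $\lambda_1=\pm1$. The paper's line ``consequently $\beta(t)\beta(\phi(t))=\pm\lambda_1$'' is unsupported.

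\textbf{Counterexample to the printed statement.} Take $\lambda_1=i$, $\lambda_2=-i$, $c=i$, $\beta\equiv i$ and $\phi(s)=1-s$.
\begin{itemize}
\item Then $T^2=-I$, so $P_1=(T+iI)/(2i)$ and $P_2=(iI-T)/(2i)$ form a family of generalized bi-circular idempotents with $T$ of Form (I).
\item Here $\beta\cdot(\beta\circ\phi)\equiv-1\notin\{\pm i\}$, so alternative (1) as printed fails.
\item The family is not bi-circular. Indeed $P_1+iP_2=\tfrac{1-i}{2i}(T-I)$, and it sends $f(t)=t^2/2$, which has norm $1$, to a function of norm $\sqrt2/2$.
\end{itemize}

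\textbf{What to write.} State and prove alternative (1) with $\beta(t)\beta(\phi(t))=\lambda_1^2$. With that correction your converse is fine. It is even simpler than you make it: $\lambda_1P_1+\lambda_2P_2=\lambda_1(P_1-P_2)=\lambda_1S$, where $S$ is the isometric reflection.
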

		
\begin{proof}
Let $\mathcal{C} =\{P_1,P_2\}$ be a family of generalized bi-circular idempotents on $C^1[0,1]$ corresponding to an isometry $T$ of Form $(I)$. Then $\lambda_1P_1(f)(t)+\lambda_2P_2(f)(t) = T(f)(t),$ where $T$ is a surjective isometry and $\lambda_1, \lambda_2$ are unit modulus complex numbers. 

By Proposition \ref{T0=0}, it follows that $T(0)=0$. Consequently, $T$ takes the form			
\begin{equation*}
T(f)(t)= cf(0) + \int_{0}^{t} \beta(s)f'(\phi(s))ds 
\end{equation*}	
for every $f \in C^1[0,1]$ and $t \in [0,1]$. Here, we can see that $T$ is a surjective linear isometry. Therefore, $P_1$ and $P_2$ are projections. Thus, we have the identity
\begin{equation} \label{GBPidentity}
T^2(f)(t) - (\lambda_1 +\lambda_2) T(f)(t) + \lambda_1\lambda_2 f(t) = 0  
\end{equation}			
for every $f \in C^1[0,1]$ and $t \in [0,1]$. Also,
\begin{equation} \label{p}
P_i = \frac{T-\lambda_jI}{\lambda_i-\lambda_j}; i,j = 1,2, i \neq j.
\end{equation}		
Now substituting the Form of $T$ in the Equation \eqref{GBPidentity}, we get 
\begin{equation} \label{main1}
\Bigg[c^2f(0) + \int_{0}^{t} \beta(s) \beta(\phi(s)) f'(\phi^2(s))ds\Bigg] - (\lambda_1 +\lambda_2) \Bigg[ cf(0) + \int_{0}^{t} \beta(s)f'(\phi(s))ds \Bigg] + \lambda_1\lambda_2 f(t) =0.
\end{equation}
We choose $f= 1$ in the Equation \eqref{main1}, which gives
\begin{align*}
c^2 - (\lambda_1 +\lambda_2)c + \lambda_1 \lambda_2 =0 \implies (c-\lambda_1)(c- \lambda_2) =0 \implies  c =\lambda_1,\lambda_2.
\end{align*}
Differentiating the Equation \eqref{main1} on both sides, we have 
\begin{equation} \label{me1}
\beta(t)\beta(\phi(t)) f'(\phi^2(t)) - (\lambda_1 +\lambda_2) \beta(t) f'(\phi(t)) + \lambda_1\lambda_2 f'(t) =0
\end{equation}
for every $f \in C^1[0,1]$ and $t \in [0,1]$. 

We assert that $\phi^2(t) =t$ for every $t \in [0,1]$. Clearly, if $\phi(t) =t$, then $\phi^2(t) = t$. Suppose $\phi(t) \neq t$ for some $t \in [0,1]$. In this case, $t, \phi(t)$, and $\phi^2(t)$ cannot be all distinct. If they are, we choose a function $f \in C^1[0,1]$ such that $f'(t) =1$ and $f'(\phi(t)) = f'(\phi^2(t))=0$. This implies $\lambda_1\lambda_2=0$, which is a contradiction. Hence, we conclude that $\phi^2(t)= t$ for every $t \in [0,1]$.

If $\phi(t) \neq t$ for some $t \in [0,1]$, then we select a function $f \in C^1[0,1]$ such that $f'(t) =1 $ and $f'(\phi(t))= 0$. From this setup, we get $(\lambda_1 + \lambda_2)\beta(t) =0$. Since $\beta(t)$ is unimodular, $\lambda_1 + \lambda_2 = 0$. Consequently, $\beta(t)\beta(\phi(t)) = \pm \lambda_1$ for every $t \in [0,1]$.

So, Equation \ref{p} reduces to
\begin{equation*} 
P_i= \frac{T- \lambda_j I}{\lambda_i - \lambda_j} = \frac{T + \lambda_iI}{2\lambda_i} = \frac{I + S}{2}
\end{equation*}
where, $S= \frac{1}{\lambda_i}T$. It is clear that $S^2 =I$. This proves the first assertion.	

If $\phi(t) =t$ for every $t \in [0,1]$, then $\beta^2(t) f'(t) - (\lambda_1 + \lambda_2) \beta(t) f'(t) + \lambda_1\lambda_2 f'(t) =0$. Moreover, $\beta^2(t) - (\lambda_1 + \lambda_2) \beta(t) + \lambda_1 \lambda_2 =0$. It follows that $\beta(t) = \lambda_1$ or $\lambda_2$. Since $[0,1]$ is a connected set, $\beta(t)$ must be a constant function.

Now, if $\beta(t) =\lambda_1$ for all $t \in [0,1]$, then $ T(f)(t) = cf(0) + \lambda_1 (f(t) - f(0))$. Therefore,
\begin{equation*} 
P_i(f)(t) = \frac{(c- \lambda_1) f(0) + (\lambda_1 - \lambda_j)f(t)}{\lambda_i-\lambda_j}; i,j = 1,2, i \neq j.
\end{equation*} 
Hence for all $\alpha, \beta$ of unimodular complex numbers,  $f \in C^1[0,1]$ and $t \in [0,1]$, we have
\begin{align*}
\alpha P_1(f)(t) + \beta P_2(f)(t) &= \alpha \frac{(c- \alpha) f(0) + (\alpha - \beta)f(t)}{\alpha - \beta} + \beta \frac{(c- \alpha) f(0)}{\beta -\alpha} \\
&= cf(0) + \alpha (f(t) - f(0)).
\end{align*}
This is a surjective isometry on $C^1[0,1]$. Therefore, the collection $\mathcal{C}$ forms a family of bi-circular idempotents.

Again, if $\beta(t) =\lambda_2$ for all $t \in [0,1]$, then $ T(f)(t) = cf(0) + \lambda_2 (f(t) - f(0))$. Moreover,
\begin{equation*} 
P_i(f)(t) = \frac{(c- \lambda_2) f(0) + (\lambda_2 - \lambda_j) f(t)}{\lambda_i-\lambda_j}; i,j = 1,2, i \neq j.
\end{equation*}
Similarly, one can verify that $\mathcal{C}$ forms a family of bi-circular idempotents. This completes the proof.
\end{proof}
		
\begin{thm} \label{I_2}
The collection $\mathcal{C} = \{P_1, P_2\}$ is a family of generalized bi-circular idempotents on $C^1[0,1]$ corresponding to an isometry $T$ of Form $(II)$ if and only if one of the following conditions is satisfied:
\begin{enumerate}
\item Each $P_i$ is the average of the identity operator and isometric reflection with $\lambda_1+\lambda_2=0$ and $\beta(t) \beta(\phi(t)) = \pm \lambda_1,\  \phi^2(t) =t$ for every $t \in [0,1]$.
\item $\mathcal{C}$ is a family of bi-circular idempotents.
\end{enumerate}
\end{thm}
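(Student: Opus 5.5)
The plan is to follow the proof of Theorem \ref{I_1}, with one change. Since Form $(II)$ involves $\overline{f(0)}$, the isometry $T$ is only real linear. So I would not substitute $T$ into the quadratic identity \eqref{GBPidentity}, because pulling the scalars $\lambda_i$ through $T$ is no longer legitimate. Instead I would split $T$ into its ``value at $0$'' part and its ``derivative'' part, and use Lemma \ref{TP=lambdaP} on each part separately.

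First, Proposition \ref{T0=0} gives $T(0)=0$, so $T(f)(t)=c\overline{f(0)}+\int_0^t\beta(s)f'(\phi(s))\,ds$. For $f\in C^1[0,1]$ put $g=P_1f$ and $h=P_2f$, so that $f=g+h$. Lemma \ref{TP=lambdaP} gives $Tg=\lambda_1 g$ and $Th=\lambda_2 h$. Evaluating at $0$ and differentiating yields
\begin{equation*}
c\overline{g(0)}=\lambda_1 g(0),\quad c\overline{h(0)}=\lambda_2 h(0),\quad \beta(t)g'(\phi(t))=\lambda_1 g'(t),\quad \beta(t)h'(\phi(t))=\lambda_2 h'(t).
\end{equation*}
Let $D u=\beta\cdot(u\circ\phi)$, a complex linear operator on $C[0,1]$. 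Every $u\in C[0,1]$ is $f'$ for some $f$, and $Df'=\lambda_1 g'+\lambda_2 h'$. Applying $D$ once more gives $D^2f'=\lambda_1^2g'+\lambda_2^2h'$. Hence $D^2-(\lambda_1+\lambda_2)D+\lambda_1\lambda_2=0$ on $C[0,1]$, which is exactly the analogue of \eqref{me1}:
\begin{equation*}
\beta(t)\beta(\phi(t))u(\phi^2(t))-(\lambda_1+\lambda_2)\beta(t)u(\phi(t))+\lambda_1\lambda_2u(t)=0 .
\end{equation*}
The value-at-$0$ conditions only say that $g(0)$ and $h(0)$ lie on the real lines $L_k=\{z: c\bar z=\lambda_k z\}$. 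These lines are distinct because $\lambda_1\neq\lambda_2$, so they give no constraint on $c$. This explains why, unlike in Theorem \ref{I_1}, no condition on $c$ appears in the statement.

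From the displayed identity, the argument of Theorem \ref{I_1} goes through verbatim, using bump functions $u$ with prescribed values at $t,\phi(t),\phi^2(t)$. It gives $\phi^2=\mathrm{id}$. If $\phi(t)\ne t$ somewhere, it gives $\lambda_1+\lambda_2=0$ and then the stated relation $\beta(t)\beta(\phi(t))=\pm\lambda_1$. In that case $P_i=(T-\lambda_jI)/(\lambda_i-\lambda_j)=(T+\lambda_iI)/(2\lambda_i)$, so $P_i=(I+S)/2$ with $S=\lambda_i^{-1}T$. Because $T$ is conjugate linear in $f(0)$, I would check $S^2=I$ directly from the explicit formula. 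The derivative part follows from $\phi^2=\mathrm{id}$ and the relation for $\beta\beta\circ\phi$. The constant part follows from $\lambda_i^{-1}c\,\overline{\lambda_i^{-1}c\bar a}=a$, which holds identically in $a$.

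If instead $\phi=\mathrm{id}$, then $\beta(t)\in\{\lambda_1,\lambda_2\}$ at each $t$, and connectedness of $[0,1]$ makes $\beta\equiv\lambda_k$ constant. Then $T(f)=c\overline{f(0)}+\lambda_k(f-f(0))$, and $P_1,P_2$ can be written explicitly through \eqref{p}. I would then verify that $\alpha P_1+\beta P_2$ has the form $c'\overline{f(0)}+(\text{unimodular})\,f(0)$-type constant plus $\alpha'(f-f(0))$, so that it is one of Miura's forms and hence an isometry, giving condition (2). The converse directions are checked directly from the explicit formulas.

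I expect the main obstacle to be this last bi-circular verification. Because $T$ is only real linear, the constant part of $\alpha P_1+\beta P_2$ acts on $f(0)$ through its decomposition along the two real lines $L_1,L_2$. This is an isometry of $\mathbb{C}$ for all unimodular $\alpha,\beta$ only under an orthogonality-type condition on $L_1,L_2$. So I would first compute that decomposition explicitly and see exactly which $c$ make the family bi-circular, adjusting the case split if necessary.
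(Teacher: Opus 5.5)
Your derivation of the functional identity through the complex-linear operator $Du=\beta\,(u\circ\phi)$ is correct, and cleaner than the paper's manipulation of $TP_1=\lambda_1P_1$; $\phi^2=\mathrm{id}$ follows as you say. But the proof breaks exactly at the step you flagged. No adjustment of the case split can rescue it, because in that branch the statement is false.

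Let $\phi=\mathrm{id}$ and $\beta\equiv\lambda_1$. Then $P_2f=\pi_2(f(0))=\frac{c\overline{f(0)}-\lambda_1 f(0)}{\lambda_2-\lambda_1}$ and $P_1f=f-f(0)+\pi_1(f(0))$. Here $\pi_k$ is the real projection of $\mathbb{C}=L_1\oplus L_2$ onto $L_k$ along the other line. For every unimodular $c$ these are nonzero idempotents with $P_1P_2=P_2P_1=0$, $P_1+P_2=I$ and $\lambda_1P_1+\lambda_2P_2=T$, so the family is generalized bi-circular.

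Now test bi-circularity on constant functions. The map $\alpha P_1+\gamma P_2$ can be an isometry only if $a\mapsto\alpha\pi_1(a)+\gamma\pi_2(a)$ is an isometry of $\mathbb{C}$.
\begin{itemize}
\item For $(\alpha,\gamma)=(1,-1)$ this forces $L_1\perp L_2$.
\item But then $iL_2=L_1$, so for $(\alpha,\gamma)=(1,i)$ the image lies in the single line $L_1$, and the map is not injective.
\end{itemize}
So orthogonality is not even sufficient, and the family is never bi-circular. The same computation on constants applies to every Form $(II)$ family, so alternative (2) never occurs. Since $L_1\perp L_2$ iff $\lambda_1=-\lambda_2$, whenever $\lambda_1+\lambda_2\neq0$ neither (1) nor (2) holds.

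Concretely, take $c=1$, $\beta\equiv1$, $\phi=\mathrm{id}$, $\lambda_1=1$, $\lambda_2=i$. Then $Tf=f-2i\,\mathrm{Im}\,f(0)$, $P_2f=(\mathrm{Im}\,f(0))(-1+i)$ and $P_1=I-P_2$, with $P_1+iP_2=T$. Yet $(P_1-P_2)(i)=2-i$ has norm $\sqrt{5}\neq1=\|i\|_\sigma$. The paper's proof passes over this case with ``proceeding as in Theorem~\ref{I_1}'', which does not survive the conjugation of $f(0)$.

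There is a second, independent problem in the branch $\phi(t_0)\neq t_0$: you cannot obtain the stated relation $\beta(t)\beta(\phi(t))=\pm\lambda_1$. With $\lambda_2=-\lambda_1$ and $\phi^2=\mathrm{id}$, your identity reduces to $(\beta(t)\beta(\phi(t))+\lambda_1\lambda_2)u(t)=0$, i.e.\ $\beta\,(\beta\circ\phi)=\lambda_1^2$. That is exactly what your planned check of $S^2=I$ for $S=\lambda_1^{-1}T$ requires. With $\pm\lambda_1$ instead, that check fails unless $\lambda_1=\pm1$.

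Also, copying Theorem~\ref{I_1} verbatim to get $\lambda_1+\lambda_2=0$ does not work. The bump with $u(t_0)=1$, $u(\phi(t_0))=0$ kills the middle term and only returns $\beta(t_0)\beta(\phi(t_0))=-\lambda_1\lambda_2$. You need $u(\phi(t_0))=1$, $u(t_0)=0$ instead.

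What your argument actually supports is a corrected classification. A Form $(II)$ family is generalized bi-circular iff one of the following holds (in both cases $c$ is arbitrary):
\begin{itemize}
\item $\lambda_1+\lambda_2=0$, $\phi^2=\mathrm{id}$ and $\beta\,(\beta\circ\phi)=\lambda_1^2$;
\item $\phi=\mathrm{id}$ and $\beta\equiv\lambda_k$ for some $k$.
\end{itemize}
In neither case is the family bi-circular.
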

\begin{proof}
Let $\mathcal{C} = \{P_1,P_2\}$ be a family of generalized bi-circular idempotents on $C^1[0,1]$ corresponding to an isometry $T$ of Form $(II)$.  Then $\lambda_1P_1(f)(t) +\lambda_2P_2(f)(t) = T(f)(t),$ where $T$ is a surjective isometry for some complex numbers $\lambda_1, \lambda_2$ of unit modulus. Proposition \ref{T0=0} gives $T(0) =0$, thus $T$ takes the form
\begin{equation*}
T(f)(t)= c\overline{f(0)} + \int_{0}^{t} \beta(s)f'(\phi(s))ds
\end{equation*}
for every $f \in C^1[0,1]$ and $t \in [0,1]$. Idempotent $P_i$ reduces to
\begin{equation} \label{p_2}
P_i(f)(t) = \frac{1}{\lambda_i-\lambda_j} \Big[ c\overline{f(0)} + \int_{0}^{t} \beta(s)f'(\phi(s))ds - \lambda_j f(t)\Big].
\end{equation}
On differentiating for $i=1$, we obtain
\begin{equation*}
(P_1(f))'(t) = \frac{1}{\lambda_1-\lambda_2} \Big[ \beta(t)f'(\phi(t)) - \lambda_2 f'(t)   \Big] .
\end{equation*}			
Taking $t=0$ in the Equation $\eqref{p_2}$, we have
\begin{equation*}
P_1(f)(0) = \frac{1}{\lambda_1-\lambda_2} \Big[ c\overline{f(0)} - \lambda_2 f(0)\Big].
\end{equation*}
Since $P_i; i = 1,2$ is a generalized bicircular idempotent, we have 
\begin{equation*}
TP_i(f)(t)= \lambda_iP_i(f)(t)
\end{equation*} 
for every $f \in C^1[0,1]$ and $t \in [0,1]$. This implies that
\begin{align*}
c\overline{(P_1(f))(0)} + \int_{0}^{t}\beta(s)(P_1(f))'(\phi(s))ds = \frac{\lambda_1}{\lambda_1-\lambda_2} \Big[ c\overline{f(0)} +\int_{0}^{t} \beta(s)f'(\phi(s)))ds - \lambda_2 f(t)\Big].
\end{align*}	
Substituting the values of $P_1(f)(0)$ and $(P_1(f))'(\phi(s))$, we have
\begin{align*}
\frac{c}{\overline{\lambda_1}-\overline{\lambda_2}}\Big[\overline{ c\overline{f(0)} -\lambda_2 f(0)}\Big] 
&+  \frac{1}{\lambda_1-\lambda_2}  \int_{0}^{t}\beta(s) \Big[ \beta(\phi(s))f'(\phi^2(s)) - \lambda_2 f'(\phi(s)) \Big] ds  \\
&= \frac{\lambda_1}{\lambda_1-\lambda_2} \Big[ c\overline{f(0)} +\int_{0}^{t} \beta(s) f'(\phi(s)) ds - \lambda_2 f(t)\Big].
\end{align*}
After simplification, we get
\begin{equation*} \label{le2}
\frac{1}{\overline{\lambda_1}-\overline{\lambda_2}} f(0) + \frac{1}{\lambda_1-\lambda_2} \int_{0}^{t} \Big[\beta(s)\beta(\phi(s)) f'(\phi^2(s)) - (\lambda_1 + \lambda_2) \beta(s) f'(\phi(s))\Big] ds + \frac{\lambda_1 \lambda_2 }{\lambda_1 - \lambda_2}f(z) =0.
\end{equation*}
Differentiating on both sides, we have 
\begin{equation} \label{me2}
\beta(t)\beta(\phi(t))f'(\phi^2(t)) - (\lambda_1 + \lambda_2) \beta(t) f'(\phi(t))  + \lambda_1 \lambda_2 f'(t) =0
\end{equation}		
for every $f \in C^1[0,1]$ and $t \in [0,1]$. 

Now, proceed in the same manner as in Theorem \ref{I_1}. We conclude that $\phi^2(t)=t$ for every $t \in [0,1]$.

If $\phi(t) \neq t$ for some $t \in [0,1]$, then $\lambda_1+ \lambda_2=0$. Hence, $\beta(t) \beta(\phi(t)) = \pm \lambda_1$ for every $t \in [0,1]$.

Thus, Equation \eqref{p_2} gives
\begin{equation*} 
P_i = \frac{T - \lambda_j I}{\lambda_i - \lambda_j} = \frac{T + \lambda_i I}{2 \lambda_i}= \frac{I + S}{2}
\end{equation*}
where $S = \frac{1}{\lambda_i}T$ with $S^2 = I$. This proves the first statement.

If $\phi(t) = t$ for every $t \in [0,1]$, then we get $\beta(t) = \lambda_1$ or $\lambda_2$. Since $[0,1]$ is a connected set, $\beta(t)$ must be a constant function.

Thus, $\beta(t) =\lambda_1$ for all $t \in [0,1]$, and hence $Tf(t) = c \overline{f(0)}  + \lambda_1 (f(t)- f(0))$. Consequently,
\begin{equation*} 
P_i(f)(t) = \frac{c \overline{f(0)} - \lambda_1f(0) + (\lambda_1 - \lambda_j)f(t) }{\lambda_i-\lambda_j}; i,j = 1,2, i \neq j.
\end{equation*} 
Proceeding as in Theorem \ref{I_1}, we obtain the desired conclusion.

Again, if $\beta(t) =\lambda_2$ for all $t \in [0,1]$, then $Tf(t) = c \overline{f(0)}  + \lambda_2(f(t) - f(0))$. Thus,
\begin{equation*} 
P_i(f)(t) = \frac{c \overline{f(0)} - \lambda_2f(0) + (\lambda_2 - \lambda_j)f(t) }{\lambda_i-\lambda_j}; i,j = 1,2, i \neq j.
\end{equation*}
Similarly, the collection $\mathcal{C} = \{P_1, P_2\}$ is a family of bi-circular idempotents. This completes the proof.
\end{proof}	

\begin{thm} \label{I_3}
The collection  $\mathcal{C}= \{P_1,P_2\}$ is a family of generalized bi-circular idempotent on $C^1[0,1]$ corresponding to an isometry $T$ of form $(III)$ if and only if  $\mathcal{C}$ is a family of bi-circular idempotents with $c=\lambda_1, \lambda_2$ and $\beta(t) \overline{\beta(\phi(t))} = 1, \phi^2(t) =t$ for every $t \in [0,1]$.
\end{thm}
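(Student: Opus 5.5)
The plan is to follow the scheme of Theorems \ref{I_1} and \ref{I_2}, with one change. Here $T$ is conjugate linear on the derivative part, so I will not use the quadratic identity \eqref{GBPidentity}; instead I will work from the relation $TP_i=\lambda_iP_i$ of Lemma \ref{TP=lambdaP}. By Proposition \ref{T0=0}, $T(0)=0$ and $T$ is real linear, so
\[
T(f)(t)=cf(0)+\int_0^t\beta(s)\overline{f'(\phi(s))}\,ds ,
\]
and by \eqref{p}, $P_1=(T-\lambda_2I)/(\lambda_1-\lambda_2)$. Write $D(h)(t)=\beta(t)\overline{h(\phi(t))}$ for the conjugate-linear operator acting on derivatives. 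Then
\[
P_1(f)(0)=\frac{(c-\lambda_2)f(0)}{\lambda_1-\lambda_2},\qquad (P_1f)'=\frac{D(f')-\lambda_2f'}{\lambda_1-\lambda_2}.
\]

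\emph{Step 1 (value at $0$).} Evaluating $TP_1f=\lambda_1P_1f$ at $t=0$ gives $cP_1(f)(0)=\lambda_1P_1(f)(0)$. Hence $(c-\lambda_1)(c-\lambda_2)f(0)=0$ for all $f$, and taking $f=1$ yields $c\in\{\lambda_1,\lambda_2\}$.

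\emph{Step 2 (derivative part).} Differentiating $TP_1f=\lambda_1P_1f$ gives $D\big((P_1f)'\big)=\lambda_1(P_1f)'$. Conjugating through $D$ produces the factor $1/(\overline{\lambda_1}-\overline{\lambda_2})$, and because $\lambda_1,\lambda_2$ are unimodular,
\[
\frac{1}{\overline{\lambda_1}-\overline{\lambda_2}}=-\frac{\lambda_1\lambda_2}{\lambda_1-\lambda_2}.
\]
Multiplying through by $\lambda_1-\lambda_2$, I expect the terms $\lambda_1\beta(t)\overline{f'(\phi(t))}$ to cancel on both sides, since $\lambda_1\lambda_2\overline{\lambda_2}=\lambda_1$. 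What should remain is
\[
\beta(t)\overline{\beta(\phi(t))}\,f'(\phi^2(t))=f'(t)\qquad\text{for all } f\in C^1[0,1],\ t\in[0,1].
\]
This is exactly the identity appearing in the theorem on isometric reflections of Form $(III)$. Choosing $f$ with $f'(t)=1$ and $f'(\phi^2(t))=0$ whenever $\phi^2(t)\neq t$ then gives $\phi^2=\mathrm{id}$ and $\beta(t)\overline{\beta(\phi(t))}=1$. Unlike Form $(I)$, no case $\lambda_1+\lambda_2=0$ splits off, because the middle term has cancelled identically.

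\emph{Step 3 (bi-circularity and converse).} I expect this to be the main obstacle. For unimodular $\alpha,\beta$ I would write
\[
\alpha P_1+\beta P_2=\frac{(\alpha-\beta)T+(\beta\lambda_1-\alpha\lambda_2)I}{\lambda_1-\lambda_2}
\]
and test whether it is an isometry in $\|\cdot\|_\sigma$. The $f(0)$-component is fine: it becomes multiplication by $\alpha$ or by $\beta$, according as $c=\lambda_1$ or $c=\lambda_2$. For the derivative component I would decompose $h=f'$ as $u+v$ with $u=(Dh-\lambda_2h)/(\lambda_1-\lambda_2)$, $v=h-u$, and Steps 1--2 give $Du=\lambda_1u$ and $Dv=\lambda_2v$. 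Since $\phi$ is an involution, the value pair $(h(t),h(\phi(t)))$ determines $(u(t),u(\phi(t)))$ and $(v(t),v(\phi(t)))$ through a $2\times 2$ real-linear relation. The task is then to show that the pointwise maximum of $|\alpha u+\beta v|$ over the pair $\{t,\phi(t)\}$ equals that of $|h|$, so that the sup norms agree.

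I would first try to identify $\alpha P_1+\beta P_2$ directly with one of Miura's forms. If that fails, I would verify the sup-norm equality pointwise on the orbits $\{t,\phi(t)\}$, splitting into fixed points $\phi(t)=t$ and two-point orbits. The converse direction is then immediate: if $\mathcal{C}$ is bi-circular, it is in particular generalized bi-circular, and Steps 1--2 give the stated constraints on $c$, $\beta$ and $\phi$.
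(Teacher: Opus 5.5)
Your Steps 1 and 2 are correct and follow the paper's own computation. Substituting $P_1=(T-\lambda_2I)/(\lambda_1-\lambda_2)$ into $TP_1=\lambda_1P_1$, the terms $\beta(s)\overline{f'(\phi(s))}$ cancel exactly as you predict. This gives $c\in\{\lambda_1,\lambda_2\}$, $\phi^2=\mathrm{id}$ and $\beta(t)\overline{\beta(\phi(t))}=1$.

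The gap is Step 3, and it cannot be closed, because the bi-circularity asserted in the statement is false. The paper's own justification for this step (that once the $P_i$ are idempotent, ``$\alpha P_1+\beta P_2=T$ holds without any condition on $\alpha,\beta$'') does not establish it either. Take $\phi=\mathrm{id}$, $\beta\equiv 1$, $c=\lambda_1=1$, $\lambda_2=-1$. Then $T(f)(t)=\overline{f(t)}+2i\,\mathrm{Im}\,f(0)$ is a Form $(III)$ isometric reflection, and
\[
P_1(f)(t)=\mathrm{Re}\,f(t)+i\,\mathrm{Im}\,f(0),\qquad P_2(f)(t)=i\,(\mathrm{Im}\,f(t)-\mathrm{Im}\,f(0))
\]
are distinct nonzero idempotents with $P_1P_2=P_2P_1=0$, $P_1+P_2=I$ and $P_1-P_2=T$. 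So $\mathcal{C}$ is generalized bi-circular and satisfies every constraint you derived. Yet for $f(t)=(1+i)t$ we get $P_1(f)(t)=t$ and $P_2(f)(t)=it$, so $(P_1+iP_2)(f)=0$. Thus $P_1+iP_2$ is not even injective, and $\mathcal{C}$ is not bi-circular.

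Your proposed pointwise test at fixed points of $\phi$ detects this failure in general. Write the scalars as $\mu_1,\mu_2$ to avoid a clash with the function $\beta$. For $f(0)=0$ and $h=f'$, the value of $(\mu_1P_1+\mu_2P_2)(f)$ at $0$ is $0$, and its derivative is $A\,D(h)+Bh$ with
\[
A=\frac{\mu_1-\mu_2}{\lambda_1-\lambda_2},\qquad B=\frac{\mu_2\lambda_1-\mu_1\lambda_2}{\lambda_1-\lambda_2}.
\]
The involution $\phi$ has a fixed point $t_0$. Test with $h=w\rho$, where $\rho$ is a $\phi$-invariant bump with $0\le\rho\le 1$, $\rho(t_0)=1$ and small support. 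Choosing $\arg w$ so that $A\beta(t_0)\overline{w}$ and $Bw$ are aligned, respectively opposed, an isometry would force $|A|+|B|=1=\bigl||A|-|B|\bigr|$. Hence $AB=0$, i.e.\ $\mu_1=\mu_2$ or $\mu_1\lambda_2=\mu_2\lambda_1$, so no generalized bi-circular family of Form $(III)$ is ever bi-circular.

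What Steps 1--2 actually establish is a corrected statement, once combined with your Step 3 remark that $D^2=I$ gives $Du=\lambda_1u$ for $u=(Dh-\lambda_2h)/(\lambda_1-\lambda_2)$ (which yields idempotency of $P_1=(T-\lambda_2I)/(\lambda_1-\lambda_2)$). Namely, $\mathcal{C}$ is a generalized bi-circular family for a Form $(III)$ isometry if and only if $c\in\{\lambda_1,\lambda_2\}$, $\phi^2=\mathrm{id}$ and $\beta\circ\phi=\beta$, with the bi-circularity clause deleted.
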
	
\begin{proof}
Let $\mathcal{C} =\{P_1,P_2\}$ be a family of generalized bi-circular idempotents on $C^1[0,1]$ corresponding to an isometry $T$ of Form $(III)$.  Then, for some unimodular complex numbers $\lambda_1,\lambda_2$, We have
$$
\lambda_1 P_1(f)(t) + \lambda_2P_2(f)(t) = T(f)(t),
$$
where $T$ is a surjective isometry. Using Proposition \ref{T0=0}. The isometry $T$ reduces to the form
\begin{equation*}
T(f)(t)=  c f(0) + \int_{0}^{t}\beta(s) \overline{f'(\phi(s))}ds  
\end{equation*}
for every $f \in C^1[0,1]$ and $t \in [0,1]$. Now, from Equation $\eqref{p}$, we obtain
\begin{equation} \label{p_3}
P_i(f)(t) = \frac{T(f)(t)- \lambda_j f(t)}{\lambda_i-\lambda_j} = \frac{1}{\lambda_i-\lambda_j} \Bigg[cf(0) + \int_{0}^{t} \beta(s)\overline{f'(\phi(s))}ds - \lambda_j f(t)\Bigg].
\end{equation}
On differentiating both sides of the Equation \eqref{p_3} for $i=1$, we have
\begin{equation*}
(P_1(f))'(t) = \frac{1}{\lambda_1-\lambda_2} \Big[\beta(t)\overline{f'(\phi(t))} - \lambda_2 f'(t)\Big].
\end{equation*}
Also put $t =0$ in \eqref{p_3} for $i=1$,
\begin{equation*}
P_1(f)(0) = \frac{1}{\lambda_1-\lambda_2} \Big[cf(0) - \lambda_2 f(0)\Big].
\end{equation*}
Now from the result $TP_1(f)(t) = \lambda_1P_1(t)$, we have
\begin{equation*}
c P_1(f)(0) + \int_{0}^{t}\beta(s) \overline{(P_1(f))'(\phi(s))}ds  = \frac{\lambda_1}{\lambda_1-\lambda_2} \Big[cf(0) + \int_{0}^{t} \beta(s)\overline{f'(\phi(s))}ds - \lambda_2 f(t)\Big].
\end{equation*}
Substituting the values of $P_1(f)(0)$ and $(P_1(f))'(t)$ 
\begin{align*}
\frac{c}{\lambda_1-\lambda_2} \Big[cf(0) - \lambda_2 f(0)\Big] &+ \frac{1}{\overline{\lambda_1} - \overline{\lambda_2}}\int_{0}^{t}\beta(s) \overline{\Big[\beta(\phi(s))\overline{f'(\phi^2(s))} - \lambda_2 f'(\phi(s))\Big]}ds \\
&= \frac{\lambda_1}{\lambda_1-\lambda_2} \Big[ cf(0) + \int_{0}^{t} \beta(s)\overline{f'(\phi(s))}ds - \lambda_2 f(t)\Big].
\end{align*}				
After simplification, we get the following equation.
\begin{equation} \label{le3}
\frac{1}{\lambda_1-\lambda_2} \big[(c^2-c\lambda_1 -c\lambda_2) f(0) + \lambda_1  \lambda_2 f(t)\big] + \frac{1}{\overline{\lambda_1} - \overline{\lambda_2}} \int_{0}^{t} \beta(s)\overline{\beta(\phi(s))} f'(\phi^2(s))ds=0.
\end{equation}			
Selecting $f= 1$, we obtain
\begin{align*}
c^2-c\lambda_1 -c\lambda_2 + \lambda_1  \lambda_2 = 0 \implies c = \lambda_1, \lambda_2.
\end{align*}
Differentiating the Equation \eqref{le3}, then	
\begin{equation}
\frac{\lambda_1\lambda_2}{\lambda_1-\lambda_2} f'(t) + \frac{1}{\overline{\lambda_1} - \overline{\lambda_2}} \beta(t)\overline{\beta(\phi(t))} f'(\phi^2(t)) = 0
\end{equation}
for every $f \in C^1[0,1]$. This implies that $\beta(t)\overline{\beta(\phi(t))} =1 $ and $\phi^2(t) = t$, for every $t \in [0,1]$. 

We started the proof by considering $\lambda_1 P_1 + \lambda_2 P_2$ to be an isometry of Form $(III)$. From this we deduce that $\beta(t)\overline{\beta(\phi(t))} =1 $ and $\phi^2(t) = t$ for every $t \in [0,1]$, which guarantees that $P_i = \frac{T- \lambda_jI}{\lambda_i - \lambda_j}$ is an idempotent map. Hence the identity $\alpha P_1 + \beta P_2 =T$ holds without any condition on scalar $\alpha, \beta$. Thus, the collection $\mathcal{C}$ forms a family of bi-circular idempotents. This concludes the proof. 
\end{proof}

\begin{thm} \label{I_4}
The collection $\mathcal{C} = \{P_1,P_2\}$ is a family of generalized bi-circular idempotents on $C^1[0,1]$ corresponding to an isometry $T$ of Form $(IV)$ if and only if  $\mathcal{C}$ is a family of bi-circular idempotents with $\beta(t) \overline{\beta(\phi(t))} = 1, \phi^2(t) =t$ for every $t \in [0,1]$.
\end{thm}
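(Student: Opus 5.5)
I will follow the pattern of the proof of Theorem \ref{I_3}, with the additional conjugation on the point evaluation at $0$ coming from Form $(IV)$. Suppose $\mathcal{C}=\{P_1,P_2\}$ is a family of generalized bi-circular idempotents with $T=\lambda_1P_1+\lambda_2P_2$ of Form $(IV)$. By Proposition \ref{T0=0} we have $T(0)=0$, so
\begin{equation*}
T(f)(t)=c\overline{f(0)}+\int_0^t\beta(s)\overline{f'(\phi(s))}\,ds .
\end{equation*}
Since $P_1+P_2=I$, we get $P_i=\frac{T-\lambda_jI}{\lambda_i-\lambda_j}$ for $i\neq j$. From this formula I will read off $P_1(f)(0)=\frac{c\overline{f(0)}-\lambda_2f(0)}{\lambda_1-\lambda_2}$ and $(P_1f)'(t)=\frac{\beta(t)\overline{f'(\phi(t))}-\lambda_2f'(t)}{\lambda_1-\lambda_2}$.

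Next I will apply Lemma \ref{TP=lambdaP}, which gives $TP_1=\lambda_1P_1$, and substitute these expressions into $T(P_1f)$. The composition of two conjugations yields
\begin{equation*}
\overline{P_1(f)(0)}=\frac{\overline{c}f(0)-\overline{\lambda_2 f(0)}}{\overline{\lambda_1}-\overline{\lambda_2}},
\end{equation*}
and the derivative term becomes
\begin{equation*}
\frac{1}{\overline{\lambda_1}-\overline{\lambda_2}}\,\beta(s)\Bigl[\overline{\beta(\phi(s))}f'(\phi^2(s))-\overline{\lambda_2}\,\overline{f'(\phi(s))}\Bigr].
\end{equation*}
Collecting terms gives an identity in $f(0)$, $\overline{f(0)}$, $f(t)$ and an integral. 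Differentiating it yields a pointwise identity of the form
\begin{equation*}
A\,\beta(t)\overline{\beta(\phi(t))}f'(\phi^2(t))+B\,\beta(t)\overline{f'(\phi(t))}+C f'(t)=0,
\end{equation*}
with explicit constants $A,B,C$ depending on $\lambda_1,\lambda_2$. Here $A=\frac{1}{\overline{\lambda_1}-\overline{\lambda_2}}$ and $C=\frac{\lambda_1\lambda_2}{\lambda_1-\lambda_2}$, so $C\neq0$.

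The key step is to exploit real versus complex linearity in $f'$. I will replace $f$ by $if$: the terms in $f'$ scale by $i$ while the term in $\overline{f'}$ scales by $-i$. Adding the original identity and the $i$-rotated one kills the conjugate term. The surviving identity is $A\beta(t)\overline{\beta(\phi(t))}f'(\phi^2(t))+Cf'(t)=0$, which should agree with the reduced equation obtained in Theorem \ref{I_3}. Choosing $f'$ with $f'(t)=1$ and $f'(\phi^2(t))=0$ when $\phi^2(t)\neq t$ gives $C=0$, a contradiction, so $\phi^2=\mathrm{id}$. Then $A\beta(t)\overline{\beta(\phi(t))}=-C$ holds identically; together with the isometric reflection computation (Form $(IV)$ theorem in Section 2), this should force $\beta(t)\overline{\beta(\phi(t))}=1$.

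I expect the main obstacle to be the constant bookkeeping in this step. The same rotation trick should be applied to the $f(0)$, $\overline{f(0)}$ part, with constants $f=1$ and $f=i$, to get the analogue of the $c$-condition. I will also need to confirm that the constraints on $\lambda_1,\lambda_2$ coming from the constant and derivative parts are consistent. In particular, I must check that the relation $-C/A=1$ (equivalently, the condition making $T^2=I$ compatible) is exactly what is needed rather than an extra restriction.

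For the converse, and to conclude bi-circularity, I will argue as at the end of Theorem \ref{I_3}. Once $\phi^2=\mathrm{id}$ and $\beta\overline{\beta\circ\phi}=1$ hold, $T$ is a conjugate-linear map with $T^2=I$. Any conjugate-linear operator commutes with neither complex scalar, so the eigen-decomposition $TP_i=\lambda_iP_i$ imposes no restriction pairing $\lambda_1,\lambda_2$ beyond what the idempotent structure gives. I will then verify directly that $\alpha P_1+\beta P_2$ is again of Form $(IV)$ for arbitrary unimodular $\alpha,\beta$, hence a surjective isometry by Miura's theorem. This gives that $\mathcal{C}$ is a family of bi-circular idempotents; the converse direction is immediate since bi-circular implies generalized bi-circular.
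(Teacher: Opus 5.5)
\paragraph{Derivation of $\phi^2=\mathrm{id}$ and $\beta\,\overline{\beta\circ\phi}=1$.} Up to this point you follow the paper's route, and that part is sound: use $T(0)=0$, write $P_1=(T-\lambda_2I)/(\lambda_1-\lambda_2)$, substitute into $TP_1=\lambda_1P_1$, and differentiate. The rotation trick is unnecessary. Since $\overline{\lambda_2}/(\overline{\lambda_1}-\overline{\lambda_2})=-\lambda_1/(\lambda_1-\lambda_2)$, the $\overline{f(0)}$ and $\overline{f'\circ\phi}$ terms cancel exactly, so $B=0$. Also $-C/A=1$ identically, so the differentiated identity is precisely $T^2=I$. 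As you phrase the trick, adding the identities for $f$ and $if$ would not cancel the conjugate term anyway; you would need $i$ times the first plus the second.

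\paragraph{The gap: your last paragraph, which cannot be closed.} For arbitrary unimodular $\mu_1,\mu_2$,
\[
\mu_1P_1+\mu_2P_2=aT+bI,\qquad a=\frac{\mu_1-\mu_2}{\lambda_1-\lambda_2},\quad b=\frac{\mu_2\lambda_1-\mu_1\lambda_2}{\lambda_1-\lambda_2}.
\]
On a constant function $z$ this map takes the value $ac\bar z+bz$. Requiring $|ac\bar z+bz|=|z|$ for all $z\in\mathbb{C}$ forces $ab=0$. So the map is an isometry only when $\mu_1=\mu_2$ or $\mu_2\lambda_1=\mu_1\lambda_2$; in particular it is not of Form $(IV)$ for general $\mu_1,\mu_2$. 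Conjugate-linearity of $T$ is exactly what destroys bi-circularity, not what rescues it.

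Here is a concrete counterexample. The map $Tf=\bar f$ is of Form $(IV)$ with $T^2=I$. With $\lambda_1=1$, $\lambda_2=-1$ one gets $P_1f=\mathrm{Re}\,f$ and $P_2f=i\,\mathrm{Im}\,f$, which form a family of generalized bi-circular idempotents. Yet $P_1+iP_2$ maps the constant $1+i$ to $0$, so it is not an isometry. By the computation above, every generalized bi-circular family attached to a Form $(IV)$ isometry fails to be bi-circular. Hence the forward implication, as stated, is false.

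\paragraph{What your computation does prove.} It establishes the corrected statement: $\mathcal{C}$ is a family of generalized bi-circular idempotents with a Form $(IV)$ isometry if and only if $\phi^2=\mathrm{id}$ and $\beta\,\overline{\beta\circ\phi}=1$. In that case $(T-\lambda_jI)/(\lambda_i-\lambda_j)$ gives such a family for \emph{every} distinct unimodular pair $\lambda_1,\lambda_2$. That is a statement about varying the idempotents with the pair, not about one fixed pair $P_1,P_2$ working for all scalars.

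\paragraph{The paper's proof.} It ends with the same unjustified step, inherited from Theorem \ref{I_3}: ``$\alpha P_1+\beta P_2=T$ holds without any condition on $\alpha,\beta$''. So the bi-circularity claim cannot be recovered from that argument either.
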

		
\begin{proof}
Let $\mathcal{C} = \{P_1,P_2\}$ be a family generalized bi-circular idempotent on $C^1[0,1]$ corresponding to an isometry $T$ of Form $(IV)$. Then 
$$
\lambda_1P_1(f)(t)+\lambda_2P_2(f)(t) = T(f)(t),
$$
where $T$ is a surjective isometry for some complex numbers $\lambda_1, \lambda_2$ of unit modulus. Using the result that $T(0) =0$. Then $T$ takes the form
\begin{equation}
T(f)(t)= c\overline{f(0)} + \int_{0}^{t}\beta(s) \overline{f'(\phi(s))}ds
\end{equation}
for every $f \in C^1[0,1]$ and $t \in [0,1]$. Now from the Equation $\eqref{p}$, we get			
\begin{equation} \label{p_4}
P_i(f)(t) = \frac{T(f)(t)- \lambda_j f(t)}{\lambda_i-\lambda_j} = \frac{1}{\lambda_i-\lambda_j} \Bigg[ c\overline{f(0)}  + \int_{0}^{t} \beta(s)\overline{f'(\phi(s))}ds - \lambda_j f(t)\Bigg]; i,j =1,2, i\neq j.
\end{equation}
On differentiating Equation \eqref{p_4} for $i=1$,
\begin{equation*}
(P_1(f))'(t) = \frac{1}{\lambda_1-\lambda_2} \Big[ \beta(t)\overline{f'(\phi(t))} - \lambda_2 f'(t)\Big].
\end{equation*}
Also put $t =0$ in \eqref{p_4} for $i=1$,
\begin{equation*}
P_1(f)(0) = \frac{1}{\lambda_1-\lambda_2} \Big[ c\overline{f(0)} - \lambda_2 f(0)\Big].
\end{equation*}
Now from the result $TP_1(f)(t) = \lambda_1P_1(t)$, we have
\begin{equation*}
c \overline{P_1(f)(0)} + \int_{0}^{t}\beta(s) \overline{(P_1(f))'(\phi(s))}ds  = \frac{\lambda_1}{\lambda_1-\lambda_2} \Big[ c\overline{f(0)} + \int_{0}^{t} \beta(s)\overline{f'(\phi(s))}ds - \lambda_2 f(t)\Big].
\end{equation*}
Putting the values of $P_1(f)(0)$ and $(P_1(f))'(t)$ 
\begin{align*}
\frac{c}{\overline{\lambda_1}- \overline{\lambda_2}}  \overline{\Big[ c \overline{f(0)} - \lambda_2 f(0)\Big]} &+ \frac{1}{\overline{\lambda_1}- \overline{\lambda_2}}\int_{0}^{t}\beta(s) \overline{\Big[ \beta(\phi(s)) \overline{f'(\phi^2(s))} - \lambda_2 f'(\phi(s))\Big]}ds \\
&= \frac{\lambda_1}{\lambda_1-\lambda_2} \Big[c \overline{f(0)} + \int_{0}^{t} \beta(s)\overline{f'(\phi(s))}ds - \lambda_2 f(t)\Big].
\end{align*}	
or
\begin{equation*} \label{le4}
\frac{1}{\overline{\lambda_1}- \overline{\lambda_2}} f(0) + \frac{\lambda_1\lambda_2}{\lambda_1-\lambda_2} f(t) + \frac{1}{\overline{\lambda_1}- \overline{\lambda_2}}\int_{0}^{t}\beta(s) \overline{\beta(\phi(s))} f'(\phi^2(s))ds  = 0.
\end{equation*}
On differentiating, we get
\begin{equation} 
\frac{\lambda_1\lambda_2}{\lambda_1-\lambda_2} f'(t) + \frac{1}{\overline{\lambda_1}- \overline{\lambda_2}} \beta(t) \overline{\beta(\phi(t))} f'(\phi^2(t))  = 0
\end{equation}
for every $f \in C^1[0,1]$. This gives us $\beta(t) \overline{\beta(\phi(t))}= 1$ and $\phi^2(t) =t$, for every $t \in [0,1]$. 

Applying the arguments used in the final step of the proof of Theorem \ref{I_3}, it follows that the collection $\mathcal{C}$ constitutes a family of bi-circular idempotents. This completes the proof.
\end{proof}

\end{document}